\theoremstyle{plain}
\newcommand{\dif}{\mathrm{d}}
\newcommand{\e}{\mathrm{e}}
\newcommand{\tr}{\mathrm{tr}}
\newcommand{\R}{\mathrm{R}}
\newtheorem{theorem}{Theorem}[section]
\newtheorem{lemma}[theorem]{Lemma}
\newtheorem{corollary}[theorem]{Corollary}
\newtheorem{proposition}[theorem]{Proposition}
\newtheorem{assumption}[theorem]{Assumptions}
\theoremstyle{definition}
\theoremstyle{remark}
\newtheorem{remark}[theorem]{Remark}
\theoremstyle{example}
\newtheorem{example}[theorem]{Example}
\numberwithin{equation}{section}
\begin{document}

\title[Curvature flows depending on mean curvature] {Contracting convex hypersurfaces by functions of the mean curvature}

\author{Shunzi Guo}
\address{Department of Mathematics and statistics Science\\Minnan
Normal University\\Zhangzhou 363000, P.R.China \\
and
School of Mathematics \\Sichuan University\\Chengdu 610065, P.R.China}
\email{guoshunzi@yeah.net}

\thanks{}

\subjclass[2010]{Primary 53C44, 35K55, 58J35, 35B40}
\keywords{curvature flow, convex hypersurface, fully nonlinear.}

\begin{abstract}
This paper concerns the evolution of a closed convex hypersurface in ${\mathbb{R}}^{n+1}$,
in direction of its inner unit normal vector,
where the speed is given by a smooth function depending only on the mean curvature,
and satisfies some further restrictions, without requiring homogeneity.
It is shown that the flow exists on
a finite maximal interval, convexity is preserved and
the hypersurfaces shrink down to a single point as the final time is
approached.
This generalises the corresponding result of Schulze \cite{Sch05} for the positive power
mean curvature flow to a much larger possible class of flows by the functions depending only on the mean curvature.
\end{abstract}

\maketitle

\section{Introduction and main result}

Let $M^n$ be a smooth, compact oriented manifold of
dimension $n (\geq 2)$ without boundary, and
$X_{0}:M^{n}\rightarrow {\mathbb{R}}^{n+1}$ a smooth immersion of $M^n$ into the euclidean space.
Consider a one-parameter family of
smooth immersions: $\mathrm{X}_{t}:M^{n}\rightarrow {\mathbb{R}}^{n+1}$, evolving according to
\begin{equation}
\left\{
\begin{array}{ll}
\frac{\partial}{\partial t}\mathrm{X}\left(p,t\right)= -\Phi(H)\left(p,t\right)\cdot \nu\left(p,t\right),& p\in M^{n},\\[2ex]
\mathrm{X}(\cdot,0)=\mathrm{X_{0}}(\cdot),
\end{array}\right.\label{eqation:1.1}
\end{equation}
where $\nu\left(p,t\right)$ is the outer unit normal to
$M_{t}$ at $\mathrm{X}\left(p,t\right)$ in tangent space $TN^{n+1}$,
$\Phi$ is a smooth supplementary function defined on an open subset in ${\mathbb{R}}$ and satisfying $\Phi' > 0$,
and $H\left(p,t\right)$ the trace of Weingarten map
$\mathscr{W}_{-\nu}\left(p,t\right)=-\mathscr{W}_{\nu}\left(p,t\right)$
on tangent space $TM^{n}$ induced by $\mathrm{X}_{t}$. Throughout
the paper, we will call such a flow {$\Phi(H)$-flow}.

For $\Phi(H) =H$, we obtain the well-known mean curvature flow,
Huisken \cite{Hui84} showed that in the Euclidean
space $\mathbb{R}^{n+1}$ any closed convex hypersurface $M_{0}$
evolving by mean curvature flow contracts to a point in finite time,
becoming spherical in shape as the limit is approached. In
\cite{Hui86}, he extended this result to compact hypersurfaces in
general Riemannian manifolds with suitable bounds on curvature.
For $\Phi(H) =-\frac{1}{H}$, we get the inverse mean curvature
flow, which was studied in Euclidean space and hyperbolic space
\cite{Ger90,Ger11} and other Riemmannian spaces,
in particular, Huisken and Ilmanen \cite{HI01} used it to prove the Penrose inequality for
asymptotically flat $3$-manifolds.
For $\Phi(H) ={H}^{\beta}$, this flow becomes the power mean curvature
flow, which has been considered by Schulze in \cite{Sch05} for $M_{0}$ of strictly positive mean curvature hypersuface in the Euclidean space,
he proved that the ${H}^{\beta}$-flow has a unique, smooth solution on a finite time
interval $[0, T)$ and $M_{t}$ converges to a point as $t\rightarrow
T$ if $M_{0}$ is strictly convex for $0<\beta <1$ or $M_{0}$ is
weakly convex for $\beta \geq 1$. Here weakly convex and strictly
convex, resp., are defined as all the eigenvalues of Weingarten map
being positive and nonnegative, resp.. But some counterexamples show
that in general the evolving hypersurfaces may not become spherical
in shape as the limit is approached.
In the previous paper \cite{GLW13},
the author, together with Li and Wu, extended Schulze¡¯s results to $h$-convex hypersurfaces
in the hyperbolic space,
and showed that if the initial hypersurface has mean curvature bounded below,
the positive power mean curvature flow has a unique, smooth solution on a finite time interval,
and converges to a point if the initial hypersurface is strictly $h$-convex for the
case that $0<\beta <1$, or weakly convex for $\beta \geq 1$.
Moreover, for the ${H}^{\beta}$-flow case with $\beta \geq 1$,
it has been found that if the initial hypersurface
$M_{0}$ has the ratio of largest to smallest principal curvatures close enough to 1 at every point,
then the evolving hypersurfaces contract to a round point: This was first shown in the Euclidean space setting
by Schulze \cite{Sch06}, then in the hyperbolic space setting by the author, Li and Wu \cite{GLW16}.

A feature of the results mentioned above is that the speeds for these flows
all depend only on the mean curvature $H$.
However, for other flows with the speeds given by arbitrary functions $\Phi$ depending only on $H$,
the understanding of convergence is far less complete, except in some specific settings
such as closed convex surface expanding (for example, see \cite[Smoczyk]{Smo97}).
There are many difficulties in understanding such flows with arbitrary speed function $\Phi$:
The first difficulty is to choose ``nice" speeds depending only on $H$ which guarantee that
we have suitable inequalities on curvatures  along such flows which
furthermore ensure that local convexity of initial data are preserved.
The second difficulty stems from the greatly increased complexity caused by the presence of the arbitrary function $\Phi$;
for instance, the application of the maximum principle to the evolution equations for geometric quantities
either fail or become more subtle, and deriving the sufficient regularity results of solutions for such flows
become potentially more complicated than that for the usual geometric flow.

The present paper considers a wide class of such flows with the ``nice" speeds $\Phi(H)$, which
satisfy the following additional conditions:
\begin{assumption} \label{conditions on Phi}
Let $\Phi:(0, +\infty)\rightarrow \mathbb{R}$ is a smooth function such that for all
$x\in (0, +\infty)$ we have
 \begin{align*}
\Phi>0, \Phi'>0, \Phi''\geq\frac{-2\Phi'}{x} \quad\text{and}\quad \Phi\Phi''x+\Phi \Phi'-\left(\Phi'\right)^{2}x\geq0.
\end{align*}
\end{assumption}

Then we will show the additional technical conditions on the $\Phi$ which determine
that the general picture of behaviour established in the positive power mean curvature flow  by \cite[Schulze]{Sch05},
remain valid for the $\Phi$-flow case.
For convenience, we define a function $g(x)=\frac{1}{\Phi\left(\frac{1}{x}\right)}$ on $(0, +\infty)$,
and set $G(x)=\int_{0}^{x}g(s)\dif s$.
The main result achieved can be exactly stated by the following theorem.
\begin{theorem}\label{main result for Phi flow}
Assume that $\mathrm{X_{0}}:M^{n}\rightarrow {\mathbb{R}}^{n+1}$ be a
smooth convex immersion and that the smooth function $\Phi:(0,+\infty)\rightarrow {\mathbb{R}}$ is
strictly increasing. Then there exists a unique, smooth solution to the flow \eqref{eqation:1.1} on a finite maximal time
interval $[0, T)$.
Furthermore, if the function $\Phi(x)$ for $x>0$ satisfies the assumptions \ref{conditions on Phi},
then $T$ is between $G\left(\frac{1}{H_{\max}(0)}\right)$ and $ nG\left(\frac{1}{H_{\min}(0)}\right)$.
In particular, in the following two cases that
\begin{itemize}
\item[i)] $M_{0}$ is strictly convex for $\frac{-2\Phi'}{H}\leq \Phi''<0$,
\item[ii)] $M_{0}$ is weakly convex for $\Phi''\geq0$ and $\Phi'\geq\frac{\Phi}{H}$,
\end{itemize}
then the hypersurfaces $M_{t}$ are strictly  convex for all $t >0$
and they contract to a point in ${\mathbb{R}}_{\kappa}^{n+1}$ as $t$
is approached to $T$.
\end{theorem}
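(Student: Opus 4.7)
My plan has four stages. First, since $\Phi'>0$ on the range of $H$, equation \eqref{eqation:1.1} is strictly parabolic and fully nonlinear in a graph representation over $M_{0}$, and standard parabolic theory yields a unique smooth solution on a maximal interval $[0,T)$. Finiteness of $T$ follows from the avoidance principle: $M_{0}$ is enclosed by some large sphere, which evolves under \eqref{eqation:1.1} by the ODE $\dot{r}=-\Phi(n/r)$ and collapses in finite time, dragging $M_{t}$ along with it.

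\textbf{Bounds on $T$.} Second, the standard flow-evolution computation, starting from $\partial_{t}g_{ij}=-2\Phi(H)h_{ij}$ and $\partial_{t}h_{ij}=\nabla_{i}\nabla_{j}\Phi(H)-\Phi(H)h_{ik}h^{k}_{j}$, gives
\begin{equation*}
\partial_{t}H \,=\, \Phi'(H)\,\Delta H \,+\, \Phi''(H)\,|\nabla H|^{2} \,+\, \Phi(H)\,|A|^{2}.
\end{equation*}
On a convex hypersurface $H^{2}/n\leq|A|^{2}\leq H^{2}$, so applying the scalar maximum and minimum principles at extrema of $H$ (where $\nabla H=0$ and $\Delta H$ has definite sign) produces the Lipschitz ODE inequalities
\begin{equation*}
\frac{d}{dt}H_{\max}\leq\Phi(H_{\max})\,H_{\max}^{2}, \qquad \frac{d}{dt}H_{\min}\geq\frac{1}{n}\,\Phi(H_{\min})\,H_{\min}^{2}.
\end{equation*}
Passing to $u=1/H$ and using the identity $\Phi(1/u)=1/g(u)$, ODE comparison with the two sphere equations $\dot{u}=-1/g(u)$ and $\dot{u}=-1/(ng(u))$, integrated against $G$, yields the stated sandwich $G(1/H_{\max}(0))\leq T\leq nG(1/H_{\min}(0))$.

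\textbf{Preservation of convexity and contraction to a point.} Third, a parallel computation for the Weingarten tensor gives
\begin{equation*}
\partial_{t}h^{i}_{j} \,=\, \Phi'(H)\Delta h^{i}_{j} + \Phi''(H)\,\nabla^{i}H\,\nabla_{j}H + \Phi'(H)|A|^{2}h^{i}_{j} + (\Phi(H)-\Phi'(H)H)\,h^{i}_{k}h^{k}_{j}.
\end{equation*}
I would apply Hamilton's tensor maximum principle to $h^{i}_{j}$ in case (ii), and to a perturbation $h^{i}_{j}-\varepsilon H g^{i}_{j}$ in case (i). At a null eigenvector $\xi$ of the relevant tensor the product reaction $h^{i}_{k}h^{k}_{j}\xi^{j}$ simplifies (vanishing in case (ii), equal to $(\varepsilon H)^{2}\xi^{i}$ in case (i)), and the Codazzi identity reduces $\nabla H$ evaluated on $\xi$ to off-diagonal components of $\nabla h$; the null-eigenvector test then collapses to a pointwise algebraic inequality in $\Phi,\Phi',\Phi''$ at $H$. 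The hypotheses of Assumption~\ref{conditions on Phi} are tailored precisely to make this test hold: case (ii), with $\Phi''\geq 0$ and $\Phi'\geq \Phi/H$, makes the gradient-quadratic and zeroth-order contributions manifestly nonnegative, and the strong maximum principle then improves weak to strict convexity for $t>0$; case (i), with $-2\Phi'/H\leq\Phi''<0$, balances the negative gradient-quadratic contribution against the positive reaction $\Phi'|A|^{2}h^{i}_{j}$, while $\Phi\Phi''x+\Phi\Phi'-(\Phi')^{2}x\geq 0$ provides exactly the algebraic slack needed to preserve a pinching ratio (smallest principal curvature over $H$), so that strict convexity is preserved quantitatively. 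Fourth, once convexity is preserved the standard extension criterion forces $H_{\max}(t)\to\infty$ as $t\to T$, and the avoidance principle applied to enclosing spheres shrinking under \eqref{eqation:1.1} drives $\mathrm{diam}(M_{t})\to 0$, so $M_{t}$ converges in Hausdorff distance to a single point $p_{\infty}\in\mathbb{R}^{n+1}$.

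\textbf{Main obstacle.} I expect the convexity-preservation step to be the chief technical difficulty. The term $\Phi''(H)|\nabla H|^{2}$ in the evolution of $h^{i}_{j}$ is genuinely non-lower-order and was absent (or trivial) in Schulze's homogeneous $H^{\beta}$-flow, where null-eigenvector gradient contributions either vanish by homogeneity or can be bounded by a single scaling relation. In the general $\Phi(H)$ setting, controlling the sign of this gradient-quadratic term at a null eigenvector forces the delicate Codazzi reduction indicated above, and it is precisely the two scalar inequalities in Assumption~\ref{conditions on Phi} that yield the algebraic bound needed to close the argument.
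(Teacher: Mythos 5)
Your outline is right on the two easy pieces (short-time existence and the $T$-sandwich via the scalar $H$-evolution and ODE comparison), but it goes wrong on the two hard pieces, which are exactly where the content of the paper lies.

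\textbf{Convexity preservation in case (i) does not follow from the direct tensor maximum principle as you describe.} You propose applying Hamilton's tensor maximum principle to $h^i_j - \varepsilon H g^i_j$ and handling the gradient term at a null eigenvector by a ``Codazzi reduction.'' But when $-2\Phi'/H\leq\Phi''<0$ the quadratic $\Phi''\,\nabla_i H\,\nabla^j H$ is nonpositive, so at a null eigenvector $\xi$ of the lower bound it contributes $\Phi''\,(\xi^i\nabla_i H)^2\leq 0$, i.e.\ it pushes $\lambda_{\min}$ down. There is no homogeneity or Codazzi identity that makes this term vanish or helps it. Your claim that the gradient term was ``absent or trivial'' in Schulze's $H^\beta$-flow is also incorrect: there $\Phi''=\beta(\beta-1)H^{\beta-2}\neq 0$ for $\beta\neq 1$, and Schulze faces exactly the same sign problem for $0<\beta<1$. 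The device that actually closes this step --- in both the paper (Lemma 5.2, Case 2) and in Schulze --- is to pass to the inverse Weingarten operator $b^i_j$, express $H(\lambda)=Q_n(\theta)$ in terms of the principal radii, and exploit the concavity of $Q_n$ to show that the bad gradient quadratic is absorbed; one then runs the maximum principle on a smooth approximation $\mathscr{A}$ to $\max(\theta_1,\dots,\theta_n)$. Nothing like this appears in your proposal, and without it the case $\Phi''<0$ is not handled.

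\textbf{The fourth assumption $\Phi\Phi''x+\Phi\Phi'-(\Phi')^2 x\geq0$ is not used where you say it is, and the step where it is actually used is missing.} You suggest it provides ``algebraic slack'' for pinching in the convexity argument. In fact the paper never invokes it in Lemma 5.2. Its sole use is in Theorem 5.7, the Tso-type speed bound: one considers $Z=\Phi(H)/(\langle X,\nu\rangle-\epsilon)$, derives its evolution (Corollary 5.6), and the condition is precisely what makes $\bigl(\Phi/(H\Phi')+1\bigr)'\leq 0$, giving a uniform constant $D$ so that the reaction term $Z^2\frac{\Phi'H}{n\Phi}\bigl(D-\epsilon H/n\bigr)$ becomes negative for $H$ large. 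This bounds $H$ as long as $M_t$ encloses a fixed ball. This is the crucial mechanism for concluding the flow contracts to a point: it rules out singularity formation before the enclosed volume vanishes. Your final step instead appeals to the avoidance principle with enclosing spheres to ``drive $\mathrm{diam}(M_t)\to 0$'' --- but the enclosing sphere only gives an upper bound on diameter and collapses at its own vanishing time; it does not prevent $M_t$ from becoming singular earlier while still enclosing a ball of positive radius. Without the $Z$-estimate and the resulting continuation criterion (combined with Theorem 4.2's curvature blowup characterization of $T$), the conclusion that $M_t$ shrinks to a single point is unsupported.
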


\begin{remark}
\begin{itemize}
\item [1.] The positivity on the first order derivative of the function $\Phi$ is essential to ensure
short-time existence like the $H^\beta$-flow case in \cite{Sch05}.
\item [2.] In order to drop ``bad" terms in the evolution equation for geometric quantities
and then apply the maximum principle to show monotonicity of curvature,
the more assumptions on $\Phi$, which is similar as that of \cite[Theorem 1]{Smo97}, are required.
However our hypotheses of $\Phi$ differ from those in \cite{Smo97}
in one important respect: No extra assumption $\left(\frac{\Phi''\Phi'}{x}\right)'\leq 0$ on $\Phi$ is required,
due to focusing on different problems.
This shows that our assumptions \ref{conditions on Phi} are more weaker than those in \cite[Theorem 1]{Smo97}.
\end{itemize}
\end{remark}

There exist many examples of natural flows with the speeds $\Phi$ satisfying the assumptions \ref{conditions on Phi}
which not covered by previous results, for example,
\begin{example}
\begin{itemize}
\item [(i)] $\Phi(x)=\beta_{1}{x}^{\beta_{2}}+\beta_{3}$ defined  on all $x\in(0, +\infty)$,
such that the constants $\beta_{i}>0$, $i=1,2$, and $\beta_{3}\geq 0$.
Obviously, these include $\Phi(H)=H$ (the mean curvature) and $\Phi(H)=H^{\beta}$ (the positive powers of the mean curvature).
In particular, the case that $\beta_{2}=1$ and $\beta_{3}=0$, i.e. $\Phi(x)=\beta_{1}{x}$ defined on all $x\in(0, +\infty)$,
corresponds the case $ii)$ in Theorem \ref{main result for Phi flow}.
And the case $\beta_{2}>1$,
in this situation where $\Phi(x)$ defined on
$\left(\left(\frac{\beta_{3}}{\beta_{1}(\beta_{2}-1)}\right)^{\frac{1}{\beta_{2}}}, +\infty\right)$
also corresponds the case $ii)$ in Theorem \ref{main result for Phi flow}.
\item [(ii)] $\Phi(x)=\beta_{1}{\sinh^{\beta_{2}}(x)}+\beta_{3}$:
 \begin{itemize}
\item [(1)]  on all $x\in(0, +\infty)$ and for the constants $\beta_{i}>0 $, $i=1,3$, and $\beta_{2}\geq 1$.
\item [(2)]  on all $x\in\left(\ln\left(\beta_{2}^{-\frac{1}{2}}+\sqrt{\beta_{2}^{-1}-1}\right), +\infty\right)$
and for the constants $\beta_{i}>0 $, $i=1,3$, and $0\leq\beta_{2}< 1$.
 \end{itemize}
 In particular, the case that $\beta_{2}>1$ and $\beta_{3}=0$, i.e. $\Phi(x)=\beta_{1}{\sinh^{\beta_{2}}(x)}$ defined on all $x\in(0, +\infty)$,
corresponds the case $ii)$ in Theorem \ref{main result for Phi flow}.
\item [(iii)] $\Phi(x)=\beta_{1}{e}^{\beta_{2}x}+\beta_{3}$
on all $x\in(0, +\infty)$ and for the constants $\beta_{i}>0 $, $i=2,3$, and $\beta_{1}\geq 1$.
\end{itemize}
\end{example}

\begin{remark}
\begin{itemize}
\item [(i)] All of the above examples can be used in Theorem \ref{main result for Phi flow}.
Note that of these, relatively few are covered by the previously results,
for example, for the flows with the speed functions $\Phi(x)=\beta_{1}{\sinh^{\beta_{2}}(x)}+\beta_{3}$
the understanding of behaviors is far less developed.
\item [(ii)] Furthermore, observe that the main result of Theorem \ref{main result for Phi flow} does not require any homogeneity condition,
as in \cite{And94, Andr94}, et al.. Nevertheless, our results are a significant extension of those in \cite{Sch05} in this direction.
\item [(iii)] As mentioned in \cite[Smoczyk]{Smo97},
one can easily check that the function $\Phi(x)=\ln x$ satisfies almost all conditions of assumptions \ref{conditions on Phi},
only except in the latest condition such that $\Phi\Phi''x+\Phi \Phi'-\left(\Phi'\right)^{2}x<0$.
 \end{itemize}
\end{remark}

We use mainly the methods used in \cite{Sch05} to prove the above theorem,
but with technical tricks for choosing the right functions $\Phi$ to get estimates,
which take control of the complications due to the presence of the arbitrary function $\Phi$.
The organization of the paper is as
follows: Section 2 introduces the notation for the paper and
summarize preliminary results employed in the rest of the paper.
Section 3 contains details of short-time existence and uniqueness of
solutions and the evolution equations of some geometric
quantities, this requires only minor modifications of the power mean curvature flow case
due to the more general $\Phi(H)$-flow case.
Section 4 shows the lower and above bounds on the maximal time, and
establishes the higher-order regularity which give rise to
the long time existence for solutions of the flow \eqref{eqation:1.1}.
Using these, section 5 deduces that solutions of the flow
\eqref{eqation:1.1} remain convex as long as it exists and proves that these hypersurfaces shrink down to a single
point in $\mathbb{R}^{n+1}$ as the final time is
approached.

\section{Notation and preliminary results}
From now on, use the same notation as in \cite{Cab,Hui84,Sch05} in
local coordinates $\{x^{i}\}$, $1\leq i \leq n$, near $p\in M^{n}$
and $\{y^{\alpha}\}$, $0 \leq \alpha, \beta \leq n$, near $F(p)\in
\mathbb{R}^{n+1}$. Denote by a bar all quantities on
$\mathbb{R}^{n+1}$, for example by
$\bar{g}=\{\bar{g}_{\alpha \beta}\}$ the metric, by
$\bar{g}^{-1}=\{\bar{g}^{\alpha \beta}\}$ the inverse of the metric,
by $\bar{y}=\{\bar{y}^{\alpha}\}$ coordinates, by $\bar{\nabla}$ the
covariant derivative, by $\bar{\Delta}$ the rough Laplacian, and by
$\bar{\R}=\{\bar{\R}_{\alpha\beta\gamma\delta}\}$ the Riemann
curvature tensor. Components are sometimes taken with respect to the
tangent vector fields $\partial_{\alpha}(=\frac{\partial}{\partial
y^{\alpha}})$ associated with a local coordinate $\{y^{\alpha}\}$
and sometimes with respect to a moving orthonormal frame
$e_{\alpha}$, where $\bar{g}(e_{\alpha},
e_{\beta})=\delta_{\alpha\beta}$. The corresponding geometric
quantities on $M^{n}$ will be denoted by $g$ the induced metric, by
$g^{-1}, \nabla, \Delta, \R,
\partial_{i}$ and $e_{i}$. Then further important quantities are
the second fundamental form $A(p)=\{h_{ij}\}$ and the Weingarten map
$\mathscr{W}=\{g^{ik}h_{kj}\}=\{h^{i}_{j}\}$ as a symmetric operator
and a self-adjoint operator respectively. The eigenvalues
$\lambda_{1}(p)\leq \cdots \leq \lambda_{n}(p)$ of $\mathscr{W}$ are
called the principal curvatures of $\mathrm{X(M^{n})}$ at
$\mathrm{X(p)}$. The mean curvature is given by
\[H:=\tr_{g}{\mathscr{W}}=h^{i}_{i}=\sum_{i=1}^{n}\lambda_{i},\]
the total curvature by
$$\bigl|A\bigr|^{2}:=\tr_{g}({\mathscr{W}^{t}\mathscr{W}})=h^{i}_{j}h^{j}_{i}=h^{ij}h_{ij}=\sum_{i=1}^{n}\lambda^{2}_{i},$$
and $\mbox{Gau\ss}$-Kronecker curvature by
$$K:=\det(\mathscr{W})=\det\{h^{i}_{j}\}=\frac{\det\{h_{ij}\}}{\det\{g_{ij}\}}=\prod_{i=1}^{n}\lambda_{i}.$$
More generally, the mixed mean curvatures $E_{r}, 1\leq r\leq n,$
are given by the elementary symmetric functions of the $\lambda_{i}$
$$E_{r}(\lambda)=\sum\limits_{1 \leq
i_{1}\leq  \cdots \leq i_{r} \leq n}\lambda_{i_{1}}\cdots
\lambda_{i_{r}} =\frac{1}{r!}\sum\limits_{
i_{1},\ldots,i_{r}}\lambda_{i_{1}}\cdots \lambda_{i_{r}},\ \ {\hbox
{for}}\  \lambda = (\lambda_{1},\ldots,\lambda_{n})\in
\mathbb{R}^{n},$$ and their quotients are
$$Q_{r}(\lambda)=\frac{E_{r}(\lambda)}{E_{r-1}(\lambda)},\ \ {\hbox
{for}}\  \lambda \in \Gamma_{r-1},$$ where $E_{0}\equiv 1,$ and
$E_{l}\equiv 0,$ if $r>n$, $\Gamma_{r}:=\{\lambda \in
\mathbb{R}^{n}| E_{i} > 0, i= 1, \ldots, r\}.$ Denote the sum of all
terms in $E_{r}(\lambda)$ not containing the factor $\lambda_{i}$ by
$E_{r;i}(\lambda)$. Then the following identities for $E_{r}$ and
the properties on the quotients $Q_{r}$ were proved by Huisken and
Sinestrari in \cite{Hui99}.

\begin{lemma}
For any $r \in \{1, \ldots, n\},$ $i \in \{1, \ldots, n\},$ and
$\lambda \in \mathbb{R}^{n}$,
\begin{align*}
\frac{\partial E_{r+1}}{\partial \lambda_{i}}(\lambda)&=E_{r;i}(\lambda),\\
E_{r+1}(\lambda)&= E_{r+1;i}(\lambda)+\lambda_{i}E_{r;i}(\lambda),\\
\sum_{i=1}^{n}E_{r;i}(\lambda)&=(n-r)E_{r}(\lambda),\\
\sum_{i=1}^{n}\lambda_{i}E_{r;i}(\lambda)&=(r+1)E_{r+1}(\lambda),\\
\sum_{i=1}^{n}\lambda^{2}_{i}E_{r;i}(\lambda)&=E_{1}(\lambda)E_{r+1}(\lambda)-(r+2)E_{r+2}(\lambda).
\end{align*}
\end{lemma}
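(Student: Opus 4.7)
The plan is to prove the five identities in sequence, exploiting the combinatorial definition of $E_r$ and $E_{r;i}$ together with the fact that later identities reduce to earlier ones.

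First I would establish the derivative formula $\partial E_{r+1}/\partial \lambda_i = E_{r;i}$ directly from the defining sum $E_{r+1}(\lambda)=\sum_{i_1<\cdots<i_{r+1}}\lambda_{i_1}\cdots\lambda_{i_{r+1}}$: since each monomial is multilinear of degree $1$ in every variable, differentiation in $\lambda_i$ keeps exactly those tuples that contain the index $i$, and removing $\lambda_i$ from each such monomial leaves precisely the sum of all $r$-fold products of the remaining variables, which is $E_{r;i}(\lambda)$ by definition. The second identity is then a splitting observation: partition the set of $(r+1)$-tuples used to build $E_{r+1}$ according to whether they contain the index $i$ or not; the former sub-sum equals $\lambda_i E_{r;i}$ (factor out $\lambda_i$), while the latter is $E_{r+1;i}$ by definition.

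Next I would derive the three summation identities. For $\sum_{i=1}^n E_{r;i}=(n-r)E_r$, I reverse the counting: each $r$-tuple $(i_1,\dots,i_r)$ contributing to $E_r$ appears inside $E_{r;i}$ for exactly those $i\in\{1,\dots,n\}$ that are disjoint from $\{i_1,\dots,i_r\}$, and there are $n-r$ such indices. For $\sum_{i=1}^n \lambda_i E_{r;i}=(r+1)E_{r+1}$, the shortest route is Euler's relation for the homogeneous polynomial $E_{r+1}$ of degree $r+1$, combined with the first identity:
\begin{equation*}
\sum_{i=1}^n \lambda_i E_{r;i}(\lambda)=\sum_{i=1}^n \lambda_i\frac{\partial E_{r+1}}{\partial \lambda_i}(\lambda)=(r+1)E_{r+1}(\lambda).
\end{equation*}

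Finally, for the quadratic identity $\sum_i \lambda_i^2 E_{r;i}=E_1 E_{r+1}-(r+2)E_{r+2}$, I would bootstrap on the previous steps by writing $\lambda_i^2 E_{r;i}=\lambda_i\bigl(\lambda_i E_{r;i}\bigr)$ and using the splitting identity with index shifted to produce $\lambda_i E_{r;i}=E_{r+1}-E_{r+1;i}$. Summing over $i$ gives
\begin{equation*}
\sum_{i=1}^n \lambda_i^2 E_{r;i}(\lambda)=E_{r+1}(\lambda)\sum_{i=1}^n \lambda_i-\sum_{i=1}^n \lambda_i E_{r+1;i}(\lambda),
\end{equation*}
and the first term on the right is $E_1 E_{r+1}$ while the second equals $(r+2)E_{r+2}$ by applying the previously established identity with $r$ replaced by $r+1$.

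None of the steps presents a real obstacle; the only thing to be careful about is the boundary convention $E_0\equiv 1$ and $E_r\equiv 0$ for $r>n$, which I would invoke in the edge cases $r=n$ (where the last identity produces a zero term on the right because $E_{n+2}=0$) and $r=0$ (where $E_{0;i}=1$ for all $i$, giving the correct count $n$ for $\sum_i E_{0;i}$). Once these boundary cases are checked, the five identities follow mechanically from the combinatorial definition.
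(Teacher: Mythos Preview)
Your argument is correct and complete. The paper itself does not supply a proof of this lemma: it simply records the identities as known and attributes them to Huisken and Sinestrari \cite{Hui99}. Your direct combinatorial derivation---partitioning monomials, Euler's relation for homogeneous polynomials, and the bootstrap via $\lambda_i E_{r;i}=E_{r+1}-E_{r+1;i}$---is the standard route and matches what one finds in that reference; there is nothing to compare against in the present paper. One minor remark: the statement restricts to $r\in\{1,\ldots,n\}$, so the $r=0$ boundary check you mention is not actually needed here, though it does no harm.
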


\begin{lemma}
i) $Q_{r+1}$ is concave on $\Gamma_{r}$ for $r \in \{0, \ldots,
n-1\}$,\\
ii) $\frac{\partial Q_{r}}{\partial \lambda_{i}}(\lambda)> 0$ on
$\Gamma_{r}$ for $i \in \{1, \ldots, n-1\}$ and $r \in \{2, \ldots,
n-1\}$.
\end{lemma}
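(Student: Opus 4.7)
The plan is to derive both assertions from Newton's classical inequalities for elementary symmetric polynomials, combined with the algebraic identities recorded in the preceding lemma.

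For part (ii), the quotient rule together with $\partial E_r/\partial \lambda_i = E_{r-1;i}$ gives
\[
\frac{\partial Q_r}{\partial \lambda_i}\;=\;\frac{E_{r-1;i}\,E_{r-1}-E_r\,E_{r-2;i}}{E_{r-1}^{\,2}}.
\]
Substituting $E_r = E_{r;i} + \lambda_i E_{r-1;i}$ and $E_{r-1} = E_{r-1;i} + \lambda_i E_{r-2;i}$ into the numerator, the $\lambda_i$-terms cancel, leaving
\[
\frac{\partial Q_r}{\partial \lambda_i}\;=\;\frac{E_{r-1;i}^{\,2}-E_{r;i}\,E_{r-2;i}}{E_{r-1}^{\,2}}.
\]
This reduces the claim to the Newton--Maclaurin inequality applied to the $(n-1)$-tuple $(\lambda_1,\ldots,\widehat{\lambda_i},\ldots,\lambda_n)$, which yields $E_{r-1;i}^{\,2}\geq\frac{r(n-r+1)}{(r-1)(n-r)}E_{r;i}\,E_{r-2;i}$. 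Since this coefficient exceeds $1$ for $r\in\{2,\ldots,n-1\}$, and since $\lambda\in\Gamma_r$ forces $E_{r-1}, E_r>0$ and thereby prevents the degenerate case (one checks that if $E_{r-2;i}=0$ then $E_{r-1}=E_{r-1;i}>0$, and if $E_{r;i}=0$ then $\lambda_i E_{r-1;i}=E_r>0$), strict positivity of $\partial Q_r/\partial \lambda_i$ follows.

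For part (i), the cleanest route is via Gårding's theory of hyperbolic polynomials. The polynomial $P(t,\lambda)=\prod_{i=1}^n(1+t\lambda_i)=\sum_k E_k(\lambda)\,t^k$ is hyperbolic in $\lambda$ with respect to $(1,\ldots,1)$, and $\Gamma_r$ sits inside its Gårding cone; a classical consequence (as used by Caffarelli--Nirenberg--Spruck and in the related literature) is that $E_{r+1}/E_r$ is concave on $\Gamma_r$. An alternative, more explicit route is to diagonalize at each point, compute the full Hessian of $Q_{r+1}$ in terms of the $E_{k;ij}$, and argue negative semidefiniteness by applying Newton-type inequalities block by block; this is doable but not particularly illuminating.

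I expect part (i) to be the main obstacle. Part (ii) reduces in a single algebraic step to one Newton inequality, whereas the concavity in (i) genuinely requires either importing the hyperbolic-polynomial framework or executing a lengthy direct Hessian computation. For a clean exposition I would invoke the hyperbolic-polynomial result, exactly as Huisken and Sinestrari do in \cite{Hui99}.
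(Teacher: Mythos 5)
The paper does not actually supply a proof of this lemma: it states the identities and the properties of $Q_r$ and refers the reader to Huisken and Sinestrari \cite{Hui99}. So there is no in-paper argument to compare against, and the right question is simply whether your blind proof is sound.

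Your argument for (ii) is correct and essentially self-contained. The quotient-rule computation
\[
\frac{\partial Q_r}{\partial \lambda_i}=\frac{E_{r-1;i}^{\,2}-E_{r;i}E_{r-2;i}}{E_{r-1}^{\,2}}
\]
is right, the Newton--Maclaurin constant $\tfrac{r(n-r+1)}{(r-1)(n-r)}>1$ is computed correctly for the $(n-1)$-tuple with $\lambda_i$ omitted, and your case analysis (when $E_{r;i}E_{r-2;i}>0$ use the strict constant, when it is negative the numerator is trivially positive, and when one factor vanishes use $E_{r-1}=E_{r-1;i}+\lambda_iE_{r-2;i}$ or $E_r=E_{r;i}+\lambda_iE_{r-1;i}$ together with $E_{r-1},E_r>0$ on $\Gamma_r$) closes every gap. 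One could shorten the edge cases by invoking the standard fact that $E_{r-1;i}=\partial E_r/\partial\lambda_i>0$ on $\Gamma_r$, but your route is fine and arguably more elementary. For (i), deferring concavity of $Q_{r+1}$ on $\Gamma_r$ to G\aa rding's hyperbolic-polynomial theory (or to Marcus--Lopes / Caffarelli--Nirenberg--Spruck) is exactly what the cited literature does; a from-scratch Hessian computation would add length without insight. Net: your proposal is correct, and it is in the same spirit as the source the paper cites, with the bonus that your treatment of (ii) is more explicit than a bare citation.
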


In graphical coordinates,
one can adopt a local graph representation for a convex hypersurface given by a height function $u$.
For future reference, it is useful to recall here some basic formulae in a graphical representation.
First we see that
\begin{equation*} 
X(p,t)=\left(x(p,t), u(x(p,t),t)\right).
\end{equation*}
So the metric and its inverse are given by
\begin{equation*} 
g_{ij} = \delta_{ij} + D_i u \, D_j u, \quad  \quad \quad g^{ij} = \delta^{ij} - \frac{D^i u \, D^j u}{1 + |D u|^2},
\end{equation*}
where $D_i$ denote the derivatives with respect to these local coordinates,
respectively.
The outward unit normal vector of
$M_{ t}$ can be expressed as
\begin{equation}\label{param outward unit normal2}
\nu=\frac{1}{\left|\xi\right|}\Big( -D u,1\Big)
\end{equation}
with
\begin{equation}\label{normal vector for graph}
\big|\xi\big|=\sqrt{1+\left|D u\right|^{2}}.
\end{equation}
The second fundamental form can be expressed as
\begin{equation*}
h_{ij} =\frac{D_{ij} u}{(1 + |Du|^2)^{1/2}},
\end{equation*}
 and
\begin{equation*}
h_{j}^{i} =\left( \delta^{ik} - \frac{D^i u \, D^k u}{1 + |D u|^2}\right) \frac{D_{kj} u}{(1 + |Du|^2)^{1/2}},
\end{equation*}
Then we obtain
    \begin{equation}\label{H on graph}
      H =g^{ij}h_{ij}      =\left(\delta^{ij}-\frac{u^iu^j}{1+|Du|^2}\right)
      \frac{u_{ij}}{\sqrt{1+|Du|^2}}
          \end{equation}

In addition, the Christoffel symbols have the expression:
\begin{equation}
\label{invg_u}
\Gamma_{ij}^k = \left(\delta^{kl} - \frac{D^k u \, D^l u}{1 + |D u|^2}\right) D_{ij} u \, D_l u.
\end{equation}

\section{Short time existence and evolution equations}
This section first consider short time existence for the initial
value problem \eqref{eqation:1.1}.
In order to obtain these results, it suffices to demand that
$\Phi'$ is strictly positive.
\begin{theorem}
Assume that $\mathrm{X_{0}}:M^{n}\rightarrow {\mathbb{R}}^{n+1}$ be a
smooth immersion and that the smooth function $\Phi:[0,+\infty)\rightarrow {\mathbb{R}}$
is strictly monotone increasing.
Then there exists a unique smooth solution
$\mathrm{X}_{t}$ of problem \eqref{eqation:1.1}, defined on some time
interval $[0, T )$, with $T > 0$.
\end{theorem}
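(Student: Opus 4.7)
The plan is to convert \eqref{eqation:1.1} into a scalar parabolic PDE via a graph representation over $M_0$, verify parabolicity using only the hypothesis $\Phi'>0$, and then invoke standard short-time existence theory for quasilinear parabolic equations on a closed manifold.

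First I would set up the graph reduction. Since $X_0$ is a smooth immersion of a closed manifold, there is a tubular neighbourhood of $M_0$ parametrised by $(p,w)\mapsto X_0(p)+w\,\nu_0(p)$ for $|w|<\varepsilon$. For sufficiently short time any evolving $M_t$ remains in this tube and can be written as $X(p,t)=X_0(p)+w(p,t)\,\nu_0(p)$. Plugging this ansatz into \eqref{eqation:1.1} and projecting on $\nu(p,t)$ yields the scalar evolution equation
\[
\partial_t w \;=\; -\,\frac{\Phi\bigl(H(p,w,\nabla w,\nabla^2 w)\bigr)}{\langle\nu_0(p),\,\nu(p,w,\nabla w)\rangle}, \qquad w(\cdot,0)\equiv 0,
\]
where, as recalled in Section 2, $H$ depends linearly on $\nabla^2 w$ with coefficients smooth in $(p,w,\nabla w)$. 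Denoting the right hand side by $F$, a routine computation shows that the principal part of the linearisation of $F$ at $w\equiv 0$ is $\Phi'(H_0)\,g_0^{ij}\,\nabla_i\nabla_j$ modulo operators of order $\leq 1$.

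Next I would verify uniform parabolicity on a small $C^2$-neighbourhood of $w\equiv 0$. Because $M^n$ is compact, $H|_{M_0}$ takes values in a compact subset of the open domain of $\Phi$, and the strict monotonicity of $\Phi$ gives $\Phi'\geq c_0>0$ on this range. Continuity ensures that $\Phi'(H)\geq c_0/2$ for all $w$ with $\|w\|_{C^2}$ sufficiently small; hence the full linearisation of $F$ is uniformly elliptic with ellipticity constant bounded below by $(c_0/2)\,\lambda_{\min}(g_0^{-1})$. Consequently the scalar equation above is a uniformly parabolic quasilinear PDE of second order in a neighbourhood of the initial data.

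At this point short-time existence and uniqueness in $C^{2+\alpha,\,1+\alpha/2}$ follow from the standard theory of nonlinear parabolic equations on closed manifolds, and Schauder bootstrap upgrades the solution to $C^\infty$ for $t>0$. One may for instance apply Hamilton's inverse function theorem in Nash-Moser form, or, as is done in \cite{Sch05,Hui84}, a linearisation-and-fixed-point scheme based on the estimates of Ladyzhenskaya-Solonnikov-Ural'tseva. Pushing the solution $w$ back through the tubular neighbourhood recovers the required smooth one-parameter family $X_t$ of immersions, and uniqueness lifts from that of $w$ since any other solution remains in the graph chart for a short time and therefore corresponds to some $w$. The only delicate point is that the sole hypothesis at this stage is $\Phi'>0$ (neither the further inequalities of Assumption \ref{conditions on Phi} nor convexity of $M_0$ are invoked), and this suffices precisely because compactness of $M_0$ converts the strict monotonicity of $\Phi$ into uniform parabolicity on a short initial time interval, in complete analogy with the $H^\beta$-flow treatment of \cite{Sch05}.
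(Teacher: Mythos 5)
Your proof is correct and follows essentially the same route as the paper: the paper simply cites Theorem~3.1 of Huisken--Polden \cite{HP99}, which is precisely the tubular-neighbourhood/graph reduction to a scalar quasilinear parabolic equation that you carry out explicitly, and then verifies the same parabolicity condition $\partial\Phi(H)/\partial\lambda_i = \Phi'>0$. You have merely unpacked the cited black box into its standard constituent steps.
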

\begin{proof}
In fact, if $f$ is any symmetric function of the curvatures
$\lambda_{i}, i\in \{1, \ldots, n\},$ it is well known (see e.g.
Theorem $3.1$ of \cite{HP99}) that a flow of the form
\begin{equation*}
\frac{\partial}{\partial t}\mathrm{X}\left(p,t\right)=
-f\left(p,t\right)\cdot\nu\left(p,t\right)
\end{equation*}
is parabolic on a given hypersurface with the condition
$\frac{\partial f}{\partial \lambda_{i}}>0$ for all $i$ holds
everywhere. Then, given any initial immersion $\mathrm{X_{0}}$
satisfying the parabolicity assumption, standard techniques ensure
the local existence and uniqueness of a solution to \eqref{eqation:1.1}
with initial value $\mathrm{X_{0}}$. In our case $f=\Phi(H)$ and
the condition reads
\begin{equation*}
\frac{\partial \Phi(H)}{\partial \lambda_{i}}= \Phi'\frac{\partial {H}}{\partial \lambda_{i}}
=\Phi'>0,
\end{equation*}
which is satisfied the condition of Theorem $3.1$ of \cite{HP99}.
\end{proof}

By a direct calculation as in \cite{Hui84}, or \cite{And94},
the following evolution equations of geometric quantities under the flow \eqref{eqation:1.1} can be easily obtained.
\begin{theorem}
On any solution $M_{t}$ of \eqref{eqation:1.1} the following hold:
\begin{align}
\partial_{t}g_{ij}&=-2\Phi(H) h_{ij},\label{metic evolution for Phih flow}\\
\partial_{t}\nu&=\nabla \Phi(H),\notag\\
\partial_{t}(\dif \mu_{t})&=-H\Phi\dif \mu_{t},\notag\\
\partial_{t}h_{ij}&=\Delta_{\dot{\Phi}} h_{ij}
+\Phi''\nabla_{i}H\nabla_{j}H-(\Phi' H+\Phi)h_{i}^{k}h_{kj} +\bigl|A\bigr|^{2}\Phi'h_{ij}, \label{evolution for h}\\
\partial_{t}h_{i}^{j}&=\Delta_{\dot{\Phi}} h_{i}^{j}\label{the evolution for weigarten}
+\Phi''\nabla_{i}H\nabla^{j}H-(\Phi' H-\Phi)h_{i}^{k}h^{j}_{k},\\
\partial_{t}H&= \Delta_{\dot{\Phi}} H +\Phi''|\nabla H|^{2}
+\bigl|A\bigr|^{2}\Phi,\label{the evolution for H}\\
\partial_{t} \langle\mathrm{X},  \nu\rangle &= \Delta_{\dot \Phi} \langle\mathrm{X},  \nu\rangle
+ \bigl|A\bigr|^{2}\Phi'\langle\mathrm{X},  \nu\rangle- (\Phi' H+\Phi).\label{evolution for support function}
\end{align}
\end{theorem}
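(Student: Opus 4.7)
The plan is to derive each formula by direct computation from the flow equation $\partial_t \mathrm{X} = -\Phi(H)\nu$, following the template of Huisken's original derivation \cite{Hui84} for mean curvature flow and its extension by Andrews \cite{And94} to general curvature flows. The only new ingredient is the chain rule applied to the composition $\Phi(H)$, namely $\nabla \Phi(H) = \Phi'(H)\nabla H$ and $\nabla_i \nabla_j \Phi(H) = \Phi'\nabla_i \nabla_j H + \Phi'' \nabla_i H \nabla_j H$; throughout, $\Delta_{\dot\Phi} := \Phi'(H)\Delta$ denotes the weighted Laplacian that naturally appears in the leading-order diffusion term.

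First I would handle the three kinematic identities. Differentiating $g_{ij} = \langle \partial_i \mathrm{X}, \partial_j \mathrm{X}\rangle$ in $t$, substituting the flow equation, and using the Weingarten relation $\partial_i \nu = h_i^k \partial_k \mathrm{X}$ yields $\partial_t g_{ij} = -2\Phi h_{ij}$. For $\partial_t \nu$, the unit-length constraint forces $\langle \partial_t \nu, \nu\rangle = 0$, while the tangential components are extracted from $\langle \partial_t \nu, \partial_i \mathrm{X}\rangle = -\langle \nu, \partial_i \partial_t \mathrm{X}\rangle = \partial_i \Phi(H)$, giving $\partial_t \nu = \nabla \Phi(H)$. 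The volume-form identity then follows from Jacobi's formula $\partial_t \dif \mu_t = \frac{1}{2} g^{ij}(\partial_t g_{ij})\,\dif \mu_t = -H\Phi\,\dif \mu_t$.

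The crux of the argument is the second fundamental form. Starting from the identity for inward flows with speed $F = \Phi(H)$,
\[
\partial_t h_{ij} = \nabla_i \nabla_j F - F\, h_i^k h_{kj},
\]
obtained by differentiating the Gauss formula $\partial_i \partial_j \mathrm{X} = \Gamma_{ij}^k \partial_k \mathrm{X} - h_{ij} \nu$ in $t$ and extracting the normal component, I would expand $F = \Phi(H)$ by the chain rule and then invoke the Euclidean Simons identity
\[
\nabla_i \nabla_j H = \Delta h_{ij} + |A|^2 h_{ij} - H h_i^k h_{kj}
\]
to assemble the stated expression for $\partial_t h_{ij}$. The equation for $h_i^j$ follows by raising an index, via $h_i^j = g^{jk} h_{ik}$ and $\partial_t g^{jk} = 2\Phi h^{jk}$, so that the combination $2\Phi - (\Phi' H + \Phi)$ collapses to $-(\Phi' H - \Phi)$ in the coefficient of $h_i^k h^j_k$. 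A further contraction with $g^{ij}$, in which the Simons term $|A|^2 \Phi' h_{ij}$ combines with its counterpart from $\partial_t g^{ij}$, delivers the scalar heat-type equation for $H$.

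Finally, for the support function $u = \langle \mathrm{X}, \nu\rangle$ direct differentiation gives $\partial_t u = -\Phi + \Phi' \langle \mathrm{X}, \nabla H\rangle$, while the classical identity $\Delta u = \langle \mathrm{X}, \nabla H\rangle + H - |A|^2 u$, obtained from the Gauss--Weingarten equations together with contracted Codazzi, when multiplied through by $\Phi'$ furnishes exactly the cancellation needed to recast $\partial_t u$ in the asserted form with remainder $-(\Phi' H + \Phi)$. None of these steps presents a substantive obstacle; the main care required is algebraic bookkeeping when passing from $h_{ij}$ to $h_i^j$, where the evolution of the metric inverse contributes a cross term that must be combined with the Simons contribution without sign errors.
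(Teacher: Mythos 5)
Your approach is the standard one that the paper itself cites (Huisken \cite{Hui84}, Andrews \cite{And94}); the paper gives no independent proof, and your plan — Gauss--Weingarten formulae, the Euclidean Simons identity, and the chain rule producing $\Phi'\nabla_i\nabla_j H + \Phi''\nabla_i H\nabla_j H$ — is exactly what those references do. The kinematic identities, the $h_{ij}$ equation, the $H$ equation, and the support-function equation are all handled correctly; your identity $\Delta\langle \mathrm{X},\nu\rangle = \langle \mathrm{X},\nabla H\rangle + H - |A|^2\langle \mathrm{X},\nu\rangle$ is the right one.

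One point you should have caught: when you raise the index via $\partial_t g^{jk} = 2\Phi h^{jk}$, the reaction term $+|A|^2\Phi' h_{ij}$ from the $h_{ij}$ equation simply survives as $+|A|^2\Phi' h_i^j$; it does not cancel against anything. So a careful completion of your computation gives
\begin{equation*}
\partial_t h_i^j = \Delta_{\dot\Phi}h_i^j + \Phi''\nabla_i H\nabla^j H - (\Phi'H-\Phi)h_i^k h_k^j + |A|^2\Phi' h_i^j,
\end{equation*}
whereas the paper's display \eqref{the evolution for weigarten} drops the last term. That omission is a typo in the paper, not an error in your method: tracing $\partial_t H = \partial_t h_i^i$ from \eqref{the evolution for weigarten} as printed yields $-(\Phi'H-\Phi)|A|^2$, contradicting \eqref{the evolution for H}, and the paper itself reinstates the $|A|^2\Phi'$ reaction term when it applies the maximum principle to $\lambda_{\min}$ in Lemma \ref{preserving strict convex}, Case 1, and in the $b_i^j$ evolution. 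You should carry the term explicitly rather than leave its fate implicit, since that is precisely where the bookkeeping you warn about must be done.
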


Furthermore, the quotients ${Q}_{r}({\lambda})$ satisfy
the following evolution equation which is an extension of
\cite[ Lemma\,2.4]{Sch05} to hypersurfaces of \eqref{eqation:1.1} in
${\mathbb{R}}^{n+1}$ :
\begin{lemma}\label{evolution for the quotient of Qr}
Suppose $\Phi$ satisfies that $\Phi''\geq 0$ and $ \Phi'{H}\geq {\Phi}$.
Let $\mathrm{X}:M^{n}\times[0,T)\rightarrow
{\mathbb{R}}^{n+1}$ be a $\Phi(H)$-flow with
\[
{E}_{r-1}\left(p,t\right)>0, \quad
{E}_{r}\left(p,t\right)\geq 0 \quad \text{for all}\
\left(p,t\right)\in M^{n}\times[0,T).
\]
Then
\begin{align}
\partial_{t}{Q}_{r}\geq\Phi'\Delta{Q}_{r}+ \left[\Phi'\bigl|{A}\bigr|^{2}-r (\Phi' H-\Phi){Q}_{r}\right]{Q}_{r}.
\end{align}
\end{lemma}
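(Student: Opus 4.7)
My plan is to compute $\partial_{t}Q_{r}$ directly, viewing $Q_{r}$ as a symmetric function of the principal curvatures, and to combine the evolution of the Weingarten map with the algebraic identities for $E_{r}$ and $E_{r;i}$ from the preceding lemma on $E_r$. At a point $p$ choose an orthonormal frame diagonalising $\mathscr{W}$, so that $h^{j}_{i}(p)=\lambda_{i}\delta^{j}_{i}$. Only the diagonal entries of $\partial_{t}h^{j}_{i}$ affect the instantaneous variation of the eigenvalues there, whence
\[
\partial_{t}Q_{r}=\sum_{i}\frac{\partial Q_{r}}{\partial \lambda_{i}}\,\partial_{t}h^{i}_{i}.
\]
Since $\Phi$ depends only on $H$, $\Delta_{\dot{\Phi}}=\Phi'\Delta$; substituting the evolution for $h^{j}_{i}$ (into which $\partial_{t}g^{ij}=2\Phi h^{ij}$ contributes an extra $|A|^{2}\Phi' h^{j}_{i}$ term on raising the index from the equation for $h_{ij}$), at $p$ one has
\[
\partial_{t}h^{i}_{i}=\Phi'\Delta h^{i}_{i}+\Phi''(\nabla_{i}H)^{2}-(\Phi'H-\Phi)\lambda_{i}^{2}+|A|^{2}\Phi'\lambda_{i}.
\]

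Summing against $\partial Q_{r}/\partial \lambda_{i}$ yields four contributions. For the Laplacian term, concavity of $Q_{r}$ on $\Gamma_{r-1}$ (part i of the second cited lemma) implies that its Hessian as a function of the symmetric matrix $h^{j}_{i}$ is negative semi-definite; combined with
\[
\Delta Q_{r}=\frac{\partial Q_{r}}{\partial h^{j}_{i}}\Delta h^{j}_{i}+\frac{\partial^{2}Q_{r}}{\partial h^{j}_{i}\,\partial h^{l}_{k}}\nabla_{m}h^{j}_{i}\,\nabla^{m}h^{l}_{k},
\]
this gives $\sum_{i}(\partial Q_{r}/\partial \lambda_{i})\Delta h^{i}_{i}\geq \Delta Q_{r}$, so that after multiplication by $\Phi'>0$ the Laplacian contribution is at least $\Phi'\Delta Q_{r}$. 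The $\Phi''$-term is non-negative by $\Phi''\geq 0$ and $\partial Q_{r}/\partial \lambda_{i}>0$ (part ii of the same lemma). For the quadratic term I use
\[
\frac{\partial Q_{r}}{\partial \lambda_{i}}=\frac{E_{r-1;i}-Q_{r}E_{r-2;i}}{E_{r-1}},\qquad \sum_{i}\lambda_{i}^{2}E_{s;i}=E_{1}E_{s+1}-(s+2)E_{s+2},
\]
together with $E_{r+1}/E_{r-1}=Q_{r}Q_{r+1}$, which after simplification yields
\[
\sum_{i}\lambda_{i}^{2}\,\frac{\partial Q_{r}}{\partial \lambda_{i}}=rQ_{r}^{2}-(r+1)Q_{r}Q_{r+1}.
\]
Finally, Euler's identity applied to the degree-one homogeneous function $Q_{r}$ gives $\sum_{i}\lambda_{i}(\partial Q_{r}/\partial \lambda_{i})=Q_{r}$, so the $|A|^{2}\Phi'$-contribution equals $|A|^{2}\Phi' Q_{r}$.

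Assembling these,
\[
\partial_{t}Q_{r}\geq \Phi'\Delta Q_{r}+\Phi'|A|^{2}Q_{r}-r(\Phi'H-\Phi)Q_{r}^{2}+(r+1)(\Phi'H-\Phi)Q_{r}Q_{r+1}.
\]
Under the hypotheses $\Phi'H\geq \Phi$ and $E_{r-1}>0$, $E_{r}\geq 0$ (which give $Q_{r},Q_{r+1}\geq 0$), the final summand is non-negative and may be dropped to obtain the stated inequality. The main obstacle I anticipate is the concavity-based reduction of the Laplacian term, which relies on the standard but non-trivial fact that concavity of a symmetric function of eigenvalues lifts to concavity on symmetric matrices, together with the combinatorial bookkeeping in $\sum_{i}\lambda_{i}^{2}\,\partial_{i}Q_{r}$; one must keep the cross term $(r+1)(\Phi'H-\Phi)Q_{r}Q_{r+1}$ explicitly until the very last step so that the sign hypothesis $\Phi'H\geq \Phi$ can be invoked to discard it.
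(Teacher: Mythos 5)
Your proof is correct and follows essentially the same route as the paper's: express $\partial_{t}Q_{r}$ via the chain rule applied to the evolution of $h^{j}_{i}$, absorb the second-derivative gradient term using concavity of $Q_{r}$, drop the $\Phi''$-term by monotonicity, evaluate $\sum_{i}\lambda_{i}^{2}\partial Q_{r}/\partial\lambda_{i}$ and $\sum_{i}\lambda_{i}\partial Q_{r}/\partial\lambda_{i}$ from Lemma~2.1, and finally discard the non-negative $(r+1)(\Phi'H-\Phi)Q_{r}Q_{r+1}$ contribution under $\Phi'H\geq\Phi$. In fact you supply the correct intermediate identity $\sum_{i}\lambda_{i}^{2}\partial Q_{r}/\partial\lambda_{i}=rQ_{r}^{2}-(r+1)Q_{r}Q_{r+1}$ where the paper misprints it as $-(r+1)E_{r}/E_{r-1}+rQ_{r}$.
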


\begin{proof}
As in \cite{Sch05}, from the evolving equation \eqref{evolution for h} of
${h}_{i}^{j}$, using
\[\partial_{t}{Q}_{r}=\frac{\partial{Q}_{r}}{\partial{h}_{i}^{j}}\left(\partial_{t}{h}_{i}^{j}\right)
\quad \text{and} \quad
\Delta{Q}_{r}=\frac{\partial{Q}_{r}}{\partial{h}_{i}^{j}}\Delta{h}_{i}^{j}
+\frac{\partial^{2}{Q}_{r}}{\partial{h}_{i}^{j}\partial{h}_{p}^{q}}\nabla^{k}
{{h}}_{i}^{j}\nabla_{k}{{h}}_{p}^{q}
\]
it is easy to calculate the derivative of ${Q}_{r}$:
\begin{align*}
\partial_{t}{Q}_{r}&=\Phi'\Delta {Q}_{r}
-\Phi'\frac{\partial^{2}{Q}_{r}}{\partial{h}_{i}^{j}\partial{h}_{p}^{q}}
\nabla^{k}{{h}}_{i}^{j}\nabla_{k}{{h}}_{p}^{q} +\Phi''\frac{\partial{Q}_{r}}{\partial{h}_{i}^{j}}
\nabla_{i}{H}\nabla^{j}{H}
\\
 &\quad-(\Phi' H-\Phi)\frac{\partial{Q}_{r}}{\partial{h}_{i}^{j}}{h}_{i}^{k}{h}^{j}_{k}+ \Phi'\bigl|{A}\bigr|^{2}\frac{\partial{Q}_{r}}{\partial{h}_{i}^{j}}h^{j}_{i}.
\end{align*}
Choosing a frame $\{e_{i}\}$ which diagonalises
$\mathscr{{W}}$, the fifth and sixth term appearing here can
be simplified using the following simple calculation with the aid of
Lemma 2.1:
\begin{align*}
\frac{\partial{Q}_{r}}{\partial{h}_{i}^{j}}{h}_{i}^{k}{h}^{j}_{k}
&=\frac{\partial{Q}_{r}}{\partial{h}_{i}^{j}}{{h}}_{i}^{k}{h}^{j}_{k}=\sum_{i=1}^{n}\frac{\partial{Q}_{r}}{\partial{\lambda}_{i}}{\lambda}^{2}_{i}
\\
&=\frac{1}{E^{2}_{r-1}}\left(E_{r-1}\sum_{i=1}^{n}E_{r-1,i}\lambda^{2}_{i}
-E_{r}\sum_{i=1}^{n}E_{r-2,i}\lambda^{2}_{i}\right)\\
&=-(r+1)\frac{E_{r}}{E_{r-1}}+r{Q}_{r}
\end{align*}
and
\begin{align*}
\frac{\partial{Q}_{r}}{\partial{h}_{i}^{j}}{h}^{j}_{i}
&=\sum_{i=1}^{n}\frac{\partial{Q}_{r}}{\partial\lambda_{i}}\lambda_{i}\\
&=\frac{1}{E^{2}_{r-1}}\left(E_{r-1}\sum_{i=1}^{n}E_{r-1,i}\lambda_{i}
-E_{r}\sum_{i=1}^{n}E_{r-2,i}\lambda_{i}\right)\\
&={Q}_{r}.
\end{align*}
In view of the Lemma\,2.2, the second, the third and the last term
in the right hand side of the evolution equation of
$Q_{r}$ are positive by monotonicity and concavity of the
${Q}_{r}$. So the desired inequality can be obtained with the
hypotheses.
\end{proof}

If the hypersurfaces $M_{t}$ are strictly convex, consider the
inverse $\mathscr{{W}}_{p}^{-1}$ of $\mathscr{{W}}_{p}$
at a given point $p\in M^{n}$, set
$\mathscr{{W}}_{p}^{-1}=\{{b}_{i}^{j}\}$, where
${b}_{i}^{j}$ is given by
${b}_{i}^{k}{h}_{k}^{j}={\delta}_{i}^{j}$. The evolution
equation of ${b}_{i}^{j}$ is similar to the $H^{\beta}$-flow case:
\begin{lemma}\label{priciple radii evolution}
For $\Phi'>0$, $ \Phi''\geq -\frac{2\Phi'}{H}$, let $M_{t}$ be a $\Phi(H)$-flow of strictly
convex hypersurfaces in ${\mathbb{R}}^{n+1}$. Then
\begin{align}\label{the evolution for pricipal radii}
\partial_{t}{b}_{i}^{j}
&=\Delta_{\dot{\Phi}} {b}_{i}^{j} -2\Phi'\left(\nabla^{k}{{b}}_{i}^{p}\right){h}_{p}^{q}\left(\nabla_{k}{{b}}_{q}^{j}\right)
- \Phi''\left({{b}}_{i}^{p}\nabla_{p}{H}\right)\left(\nabla^{q}{H}{{b}}_{q}^{j}\right)\\
 &\quad+(\Phi' H-\Phi){\delta}_{i}^{j}
-\Phi'\bigl|{A}\bigr|^{2}{b}_{i}^{j}\notag
\\
&\leq \Delta_{\dot{\Phi}} {b}_{i}^{j}
+(\Phi' H-\Phi){\delta}_{i}^{j}
-\Phi'\bigl|{A}\bigr|^{2}{b}_{i}^{j}.\notag
\end{align}
\end{lemma}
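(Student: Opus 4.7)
The plan is to derive the equality in \eqref{the evolution for pricipal radii} by differentiating the defining identity $b^k_i h^j_k = \delta^j_i$ in both time and space, substituting the evolution equation \eqref{the evolution for weigarten} for $h^j_i$, and then to show that the two gradient terms may be dropped under the hypothesis $\Phi'' \geq -2\Phi'/H$. The approach mirrors the $H^\beta$-flow computation of \cite{Sch05}, but with extra care to track how the coefficient $-2$ and the weight $H$ conspire to match the hypothesis on $\Phi''$.

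First, differentiating in time gives $\partial_t b^j_i = -b^k_i(\partial_t h^l_k) b^j_l$, while differentiating spatially yields the matrix identity $\nabla_p b = -b(\nabla_p h)b$, equivalently $b(\nabla_p h)b = -\nabla_p b$. Iterating and using the rearrangement $(\nabla_q h) b = -h(\nabla_q b)$ produces $b(\nabla_p h) b (\nabla_q h) b = (\nabla_p b)\, h\, (\nabla_q b)$, and tracing the second spatial derivative of $b^j_i$ with $g^{pq}$ gives
\[
-\Phi' b^k_i(\Delta h^l_k) b^j_l = \Phi' \Delta b^j_i - 2\Phi' g^{pq}(\nabla_p b^a_i) h^c_a (\nabla_q b^j_c).
\]
Substituting \eqref{the evolution for weigarten} (augmented by the $|A|^2\Phi' h^j_i$ term arising from $\partial_t g^{jk}$) into the expression for $\partial_t b^j_i$, and collapsing the algebraic contractions via $b^k_i h^l_k b^j_l = b^j_i$ and $b^k_i h^m_k h^l_m b^j_l = \delta^j_i$, produces precisely the equality stated in the lemma.

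For the inequality, it then suffices to show
\[
2\Phi'(\nabla^k b^p_i) h^q_p (\nabla_k b^j_q) + \Phi'' (b^p_i \nabla_p H)(\nabla^q H\, b^j_q) \geq 0
\]
as a symmetric $(1,1)$-tensor. Working in an orthonormal frame that diagonalizes $\mathscr{W}$ at the point, setting $\mu_p = 1/\lambda_p$ and $a^p_k := \sum_i \mu_i v^i \nabla_k h_{pi}$ for an arbitrary tangent vector $v$, the first quantity equals $2\Phi'\sum_{k,p}\mu_p (a^p_k)^2$, which dominates $2\Phi'\sum_p \mu_p (a^p_p)^2$. The Codazzi identity $\nabla_i h_{pp} = \nabla_p h_{ip}$ rewrites the second quantity as $\Phi''\bigl(\sum_p a^p_p\bigr)^2$. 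Applying Cauchy--Schwarz,
\[
\bigl(\textstyle\sum_p a^p_p\bigr)^2 \leq \bigl(\textstyle\sum_p \mu_p^{-1}\bigr)\sum_p \mu_p (a^p_p)^2 = H\sum_p \mu_p (a^p_p)^2,
\]
and combining with $H\Phi'' + 2\Phi' \geq 0$ closes the argument in both signs of $\Phi''$.

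The main technical obstacle is the bookkeeping behind the matrix reduction $b(\nabla h) b(\nabla h) b = (\nabla b) h (\nabla b)$, which pins down the precise coefficient $-2$ on the gradient-of-$b$ term, together with the matching of the Cauchy--Schwarz constant $\sum_p \mu_p^{-1} = H$ against the lower bound $-2\Phi'/H$ on $\Phi''$. This matching is exactly what dictates the form of the hypothesis in Assumption \ref{conditions on Phi}: a weaker bound would fail to absorb the gradient-of-$H$ term into the gradient-of-$b$ term.
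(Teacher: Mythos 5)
Your proof of the equality follows the same route as the paper (differentiate $b^k_i h^j_k = \delta^j_i$ in time and space, substitute the evolution of $h^j_i$), and you correctly flag that the $|A|^2\Phi' h^j_i$ term must be present in the $(1,1)$-version of the Weingarten evolution even though it is dropped in the paper's displayed equation \eqref{the evolution for weigarten} --- without it one cannot produce the $-\Phi'|A|^2 b^j_i$ term of the Lemma. (Minor quibble: that term is inherited from the $(0,2)$-equation \eqref{evolution for h}, not generated by $\partial_t g^{jk}$, whose contribution is already absorbed in the coefficient $-(\Phi'H-\Phi)$; but the conclusion is the same.)

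For the inequality, however, you take a genuinely different and more elementary route than the paper. The paper splits into the trivial case $\Phi''\geq 0$ and the case $-2\Phi'/H\leq\Phi''<0$, and for the latter invokes the algebraic inversion formula \eqref{a first deirivate formula} for $f(h^j_i)=1/g(b^j_i)$ with $f=H$, $g=Q_n(\theta)$; after combining with \eqref{a general deirvate formular}, the gradient-of-$b$ term is traded for $-\tfrac{2}{H}(b\nabla H)(\nabla H\,b)$ plus a Hessian of $Q_n$, and the concavity of $Q_n$ on $\Gamma_{n-1}$ (Lemma 2.2) supplies the sign. Your argument instead diagonalizes $\mathscr W$ pointwise, writes both gradient terms in the auxiliary variables $a^p_k=\sum_i\mu_i v^i\nabla_k h_{pi}$, uses Codazzi to identify $\sum_p a^p_p$ with $\sum_i\mu_i v^i\nabla_i H$, discards the off-diagonal contributions ($k\neq p$) from the manifestly nonnegative first term, and closes with the weighted Cauchy--Schwarz inequality $(\sum_p a^p_p)^2\leq (\sum_p\mu_p^{-1})\sum_p\mu_p(a^p_p)^2 = H\sum_p\mu_p(a^p_p)^2$. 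This handles both signs of $\Phi''$ uniformly and avoids both the inversion formula and the concavity of $Q_n$, at the cost of working in a chosen frame rather than tensorially. What your version makes especially transparent is exactly why the threshold $\Phi''\geq -2\Phi'/H$ is the natural one: the $2$ is the Leibniz coefficient from $\Delta(b\,h\,b)$, and the $H$ is the Cauchy--Schwarz constant $\sum_p\lambda_p$.

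Both proofs are correct; yours is a legitimate and arguably cleaner alternative, while the paper's ties the hypothesis to the concavity of $Q_n$, a structural fact it reuses elsewhere (Lemma \ref{evolution for the quotient of Qr}).
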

\begin{proof}
Compute from ${b}_{i}^{k}{h}_{k}^{j}={\delta}_{i}^{j}$
\[
\partial_{t}{b}_{i}^{j}=-{{b}}_{i}^{p}\left(\partial_{t}{h}_{p}^{q}\right){{b}}_{j}^{q}
\]
and
\[
\nabla_{k}{b}_{i}^{j}=-{{b}}_{i}^{p}\left(\nabla_{k}{h}_{p}^{q}\right){{b}}_{j}^{q}
\]
which implies
\[
\Delta
{b}_{i}^{j}=-{{b}}_{i}^{p}\left(\Delta{h}_{p}^{q}\right){{b}}_{j}^{q}
+2\nabla^{k}{{b}}_{i}^{p}{h}_{p}^{q}\nabla_{k}{{b}}_{j}^{q}.
\]
Together with equation \eqref{the evolution for weigarten}, this gives the equality.

{\bf Case 1. }
For $\Phi''\geq 0$, the inequality follows immediately.

{\bf Case 2. }
For $-\frac{2\Phi'}{H}\leq \Phi''< 0$, the two gradient terms on the  right hand side of the equality in
Lemma \ref{priciple radii evolution} have the desired sign, we have to work a bit more.
Note that
\begin{equation}\label{inverse curvature relation}
- \Phi''\left({{b}}_{i}^{p}\nabla_{p}{H}\right)\left(\nabla^{q}{H}{{b}}_{q}^{j}\right)
= -\frac{\partial^{2}\Phi}{\partial {h}_{l}^{k}\partial{h}_{n}^{m}}
{b}_{i}^{p}\nabla_{p}{{h}_{l}^{k}}\nabla^{q}{h}_{n}^{m}{{b}}_{q}^{j}.
\end{equation}
As in
(\cite{Sch05}, Lemma 2.5), note that $H(\lambda)=Q_{n}(\theta)$,
where the $\theta_{i}$ are the principle radii, i.e., $\theta_{i}=\frac{1}{\lambda_{i}}$.
For general functions $f, g$
satisfying $f({{h}}_{i}^{j})=1/g({{b}}_{i}^{j})$ one can
compute that
\begin{equation}
\frac{\partial^{2}f}{\partial {h}_{i}^{j}\partial
{h}_{p}^{q}} =\frac{2}{f}\frac{\partial f}{\partial
{h}_{i}^{j}}\frac{\partial f}{\partial{h}_{p}^{q}}
-f^{2}\frac{\partial^{2}f}{\partial {h}_{m}^{n}\partial
{h}_{k}^{l}}{{b}}^{ni}{{b}}_{mj}{{b}}^{kp}{{b}}_{lq}
-\frac{\partial f}{\partial{h}^{jq}}{h}^{ip}
-\frac{\partial
f}{\partial{h}_{ip}}{h}_{jq}.\label{a first deirivate formula}
\end{equation}
By the chain rule
\begin{equation*}
\frac{\partial\Phi}{\partial{h}_{p}^{q}}({\lambda})
=\Phi'\frac{\partial{H}}{\partial{h}_{p}^{q}}
=\Phi'{\delta_{q}^{p}}
\end{equation*}
and
\begin{equation}\label{a general deirvate formular}
\frac{\partial^{2}\Phi}{\partial
{h}_{m}^{n}\partial {h}_{l}^{k}}
=\Phi''{\delta_{n}^{m}}{\delta_{l}^{k}}+\Phi'\frac{\partial^{2}H}{\partial {h}_{m}^{n}\partial
{h}_{l}^{k}}.
\end{equation}
From \eqref{a first deirivate formula} (with $f=H$) and \eqref{a general deirvate formular}, it
follows
\begin{align}
-\frac{\partial^{2}H}{\partial
{h}_{n}^{m}\partial {h}_{l}^{k}}{b}_{i}^{p}\nabla_{p}{{h}_{l}^{k}}\nabla^{q}{h}_{n}^{m}{{b}}_{q}^{j}
&=-\frac{2}{H}{b}_{i}^{p}\nabla_{p}H\nabla^{q}H{{b}}_{q}^{j}
+H^{2}\frac{\partial^{2}{Q}_{n}}{\partial{b}_{r}^{s}\partial
{b}_{t}^{u}}{b}_{i}^{p}\nabla_{p}{{h}_{s}^{r}}\nabla^{q}{h}_{t}^{u}{{b}}_{q}^{j}\notag\\
&\quad
 +2\nabla_{k}{b}_{i}^{p}{h}_{q}^{p}\nabla^{k}{h}_{q}^{j},
\label{second deirivate formula}
\end{align}
where the Codazzi equation has been used.
Now by identities \eqref{second deirivate formula}, \eqref{inverse curvature relation} and \eqref{a general deirvate formular}
one can write \eqref{the evolution for pricipal radii} as
\begin{align*}
\partial_{t}{b}_{i}^{j}
&=\Delta_{\dot{\Phi}} {b}_{i}^{j}
-2\Phi'\nabla_{k}{b}_{i}^{p}{h}_{q}^{p}\nabla^{k}{h}_{q}^{j}- \Phi''\left({{b}}_{i}^{p}\nabla_{p}{H}\right)\left(\nabla^{q}{H}{{b}}_{q}^{j}\right)
-\frac{2 \Phi'}{H}{b}_{i}^{p}\nabla_{p}H\nabla^{q}H{{b}}_{q}^{j}
\\
 &\quad+ \Phi'H^{2}\frac{\partial^{2}{Q}_{n}}{\partial{b}_{r}^{s}\partial
{b}_{t}^{u}}{b}_{i}^{p}\nabla_{p}{{h}_{s}^{r}}\nabla^{q}{h}_{t}^{u}{{b}}_{q}^{j}
+2\Phi'\nabla_{k}{b}_{i}^{p}{h}_{q}^{p}\nabla^{k}{h}_{q}^{j}+(\Phi' H-\Phi){\delta}_{i}^{j}
-\Phi'\bigl|{A}\bigr|^{2}{b}_{i}^{j}\notag
\\
&=\Delta_{\dot{\Phi}} {b}_{i}^{j}
- \left(\Phi''+\frac{2 \Phi'}{H}\right)
{b}_{i}^{p}\nabla_{p}H\nabla^{q}H{{b}}_{q}^{j}+ \Phi'H^{2}\frac{\partial^{2}{Q}_{n}}{\partial{b}_{r}^{s}\partial
{b}_{t}^{u}}{b}_{i}^{p}\nabla_{p}{{h}_{s}^{r}}\nabla^{q}{h}_{t}^{u}{{b}}_{q}^{j}
\\
 &\quad
+(\Phi' H-\Phi){\delta}_{i}^{j}
-\Phi'\bigl|{A}\bigr|^{2}{b}_{i}^{j}.\notag
\end{align*}
 Using the concavity of ${Q}_{n}(\theta)$ and the assumption $-\frac{2\Phi'}{H}\leq \Phi''\leq 0$,
 it follows that
\begin{align*}
\partial_{t}{b}_{i}^{j}
\leq \Delta_{\dot{\Phi}} {b}_{i}^{j}
+(\Phi' H-\Phi){\delta}_{i}^{j}
-\Phi'\bigl|{A}\bigr|^{2}{b}_{i}^{j}.\notag
\end{align*}.
\end{proof}

\section{The long time existence}
The third section has shown that the equation \eqref{eqation:1.1} has a
(unique) smooth solution on a short time if the initial hypersurface
in $\mathbb{R}^{n+1}$ is  convex. This section consider
the long time behavior of  \eqref{eqation:1.1} and establish the
existence of a solution on a finite maximal interval.

As a first step the maximum principle applied to the evolution
equation of ${H}$ guarantee that the minimum $H_{\min}$ of $H$
is increasing under the flow \eqref{eqation:1.1} which ensures the uniform parabolicity of our equation.
\begin{proposition}\label{minH is increasing}
Under the assumptions of Main Theorem\,1.2,
\[
H_{\min}(t)\geq \frac{1}{G^{-1}\left(G\left(\frac{1}{H_{\min}(0)}
\right)-\frac{t}{n}\right)}
\]
which gives an upper bound on the maximal existence time $T$:
\[
T\leq nG\left(\frac{1}{H_{\min}(0)}\right).
\]
\end{proposition}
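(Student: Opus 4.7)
The plan is to convert the evolution equation \eqref{the evolution for H} for the mean curvature into a scalar ODE inequality for $H_{\min}(t)$, and then to recognise the resulting ODE as precisely the one whose solution is encoded by the function $G$. First I would apply Hamilton's parabolic maximum principle to $H_{\min}(t) := \min_{p \in M^n} H(p,t)$, which is locally Lipschitz in $t$. At any point $(p_0, t_0)$ attaining this spatial minimum we have $\nabla H(p_0, t_0) = 0$ and $\Delta H(p_0, t_0) \geq 0$. Since $\Phi' > 0$, the diffusion term $\Delta_{\dot{\Phi}} H = \Phi'\Delta H$ is non-negative at $(p_0, t_0)$, and the term $\Phi''|\nabla H|^2$ vanishes there regardless of the sign of $\Phi''$. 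Feeding this into \eqref{the evolution for H} yields, in the sense of forward difference quotients,
$$\frac{d}{dt} H_{\min}(t) \;\geq\; |A|^2(p_0,t)\,\Phi(H_{\min}(t)).$$

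Next I would combine this with the pointwise Cauchy--Schwarz bound $|A|^2 \geq H^2/n$ and the positivity $\Phi > 0$ from Assumption \ref{conditions on Phi} to obtain the closed scalar ODE inequality
$$\frac{d}{dt} H_{\min}(t) \;\geq\; \frac{H_{\min}^2(t)}{n}\,\Phi(H_{\min}(t)).$$
The natural substitution is $\phi(t) := 1/H_{\min}(t)$: using $\phi' = -H_{\min}'/H_{\min}^2$ together with the identity $g(x) = 1/\Phi(1/x)$ converts the above inequality to
$$\phi'(t) \;\leq\; -\frac{\Phi(1/\phi(t))}{n} \;=\; -\frac{1}{n\,g(\phi(t))}.$$
Multiplying through by $g(\phi) > 0$ and integrating from $0$ to $t$ gives $G(\phi(t)) \leq G(\phi(0)) - t/n$.

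Since $g > 0$ on $(0,+\infty)$, the function $G$ is strictly increasing with $G(0) = 0$, so inversion yields
$$\phi(t) \;\leq\; G^{-1}\!\left(G\!\left(\tfrac{1}{H_{\min}(0)}\right) - \tfrac{t}{n}\right),$$
which is exactly the stated lower bound on $H_{\min}(t)$. The upper bound on $T$ then follows by a contradiction argument: as long as the smooth flow exists we must have $\phi(t) > 0$, whereas the right-hand side above forces $\phi(t) \to 0^+$ as $t \uparrow n G(1/H_{\min}(0))$ and ceases to be defined beyond it; hence $T \leq n G(1/H_{\min}(0))$.

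The only non-routine aspect is the rigorous application of the parabolic maximum principle to $H_{\min}$, which is only locally Lipschitz in $t$ rather than $C^1$; this is handled by Hamilton's standard ODE--PDE comparison. The algebraic manoeuvre $\phi = 1/H_{\min}$ is the key observation which identifies $g$ and $G$ as the natural quantities for separation of variables; once recognised, the estimate is mechanical and, notably, does not require the finer structural assumptions on $\Phi''$ from Assumption \ref{conditions on Phi} beyond $\Phi > 0$ and $\Phi' > 0$.
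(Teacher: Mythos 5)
Your proof is correct and follows essentially the same route as the paper's: apply the maximum principle to $\partial_t H = \Delta_{\dot\Phi}H + \Phi''|\nabla H|^2 + |A|^2\Phi$, discard the $\Phi'\Delta H \geq 0$ and $\Phi''|\nabla H|^2 = 0$ terms at a spatial minimum, use $|A|^2 \geq H^2/n$, and then integrate the resulting ODE via the substitution $1/H_{\min}$ that makes $g$ and $G$ appear. The only cosmetic difference is that the paper states the comparison ODE solution in closed form while you derive it by separation of variables; your observation that only $\Phi>0$ and $\Phi'>0$ (not the $\Phi''$ conditions in Assumptions \ref{conditions on Phi}) are actually used here is accurate.
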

\begin{proof}
A direct calculation using $\bigl|A\bigr|^{2}\geq\frac{1}{n}H^{2}$
and the evolution equation \eqref{the evolution for H} of
$ H$ gives
\[
\partial_{t} H_{\min}\geq
\frac{1}{n}\Phi(H_{\min})H_{\min}^{2}.
\]
Now let $\phi$ be the solution of the ODE
\[
\left\{
\begin{array}{ll}
\frac{\dif\phi}{\dif t}=\frac{1}{n}\Phi(\phi)\phi^{2},\\[2ex]
\phi(0)= H_{\min}(0),
\end{array}\right.
\]
then by the maximum principle
\[
 H\geq \phi \qquad \text{on} \qquad  0\leq t \leq T.
\]
On the other hand $\phi$ is explicitly given by
\[
\phi(t)=\frac{1}{G^{-1}\left(G\left(\frac{1}{H_{\min}(0)}
\right)-\frac{t}{n}\right)},
\]
which implies
\[
 H_{\min}(t)\geq\frac{1}{G^{-1}\left(G\left(\frac{1}{H_{\min}(0)}
\right)-\frac{t}{n}\right)}.
\]
Thus,
 \[ H_{\min}(t)\rightarrow\infty \qquad \text{ as} \qquad
G\left(\frac{1}{H_{\min}(0)}
\right)-\frac{t}{n}\rightarrow
0+,
\]
which proves Proposition \ref{minH is increasing}.
\end{proof}

\begin{theorem}\label{the long time existence for PhiH}
Let $[0, T)$ be the maximal existence interval of the flow
\eqref{eqation:1.1} $M_{t}$ with $\Phi'>0$ on
$[\delta_{0},+\infty)$. Then
\[
T\leq nG\left(\frac{1}{H_{\min}(0)}\right).
\]
Moreover, $\max_{M_{t}}\bigl|A\bigr|^{2}\rightarrow +\infty$ as
$t\rightarrow T$.
\end{theorem}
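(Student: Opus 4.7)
The bound $T\le nG(1/H_{\min}(0))$ is already established in Proposition \ref{minH is increasing}, so the substantive content is to show that no quantitative upper bound on $|A|^2$ can persist up to the maximal time $T$. My plan is the standard contradiction argument: assume $\sup_{M_t}|A|^2$ remains bounded on $[0,T)$ and show that the flow can then be extended smoothly beyond $T$, violating maximality.

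First, assume for contradiction that there is a constant $C_0$ with $|A|^2\le C_0$ on $M^n\times[0,T)$, and combine this with Proposition \ref{minH is increasing} to get a two-sided bound
\[
H_{\min}(0)\le H(p,t)\le \sqrt{n}\,\sqrt{C_0}
\]
on $[0,T)$. Since $\Phi'>0$ on $[\delta_0,+\infty)$ and $H$ stays in a compact subset of this interval, the flow is uniformly parabolic with smooth coefficients. The normal speed $\Phi(H)$ is therefore uniformly bounded, and integrating $\partial_t X=-\Phi(H)\nu$ along each trajectory, the immersions $X(\cdot,t)$ form a Cauchy family in $C^0$ and converge as $t\to T^-$ to a continuous limiting immersion $X_T$.

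Next I would promote this $C^0$ convergence to $C^\infty$ convergence. Writing $M_t$ locally as a graph $u(\cdot,t)$ over a fixed hyperplane (this is legitimate on short subintervals $[t_0,T)$ because the unit normal varies continuously and $M_{t_0}$ is convex), the evolution of $u$ becomes a scalar fully nonlinear parabolic equation whose principal part has eigenvalues pinched between two positive constants, thanks to the uniform bounds on $H$ and $|A|^2$. A first $C^1$ bound on $u$ comes from the fact that any outward unit normal stays uniformly away from the base hyperplane as long as $|A|$ is bounded. Given these $C^1$ bounds, Krylov--Safonov applied to the linearisation (or directly the Evans--Krylov theorem using that the operator $F(D^2u,Du)=-\Phi(H[u])\sqrt{1+|Du|^2}$ is concave or convex on the relevant cone, up to affine terms) yields a uniform $C^{2,\alpha}$ estimate on compact time intervals. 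Schauder estimates bootstrapped against the evolution equations \eqref{evolution for h}--\eqref{the evolution for H}, which are all uniformly parabolic linear equations with smooth coefficients under our hypotheses, then give uniform $C^{k,\alpha}$ bounds for every $k$.

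Having obtained $C^\infty$ bounds independent of $t\in[0,T)$, the limit $X_T$ is a smooth immersion satisfying the parabolicity hypothesis of the short-time existence theorem of Section 3. Reapplying that theorem to $X_T$ produces a smooth extension of the flow to some interval $[T,T+\varepsilon)$, contradicting the maximality of $T$. Therefore $\limsup_{t\to T}\max_{M_t}|A|^2=+\infty$, and since $|A|^2$ is continuous in $t$ and $M^n$ is compact this supremum is in fact attained at each time and must tend to $+\infty$. The main obstacle in this programme is the $C^{2,\alpha}$ estimate: the operator $\Phi(H)$ is not a priori concave in $D^2u$ for an arbitrary $\Phi$ satisfying only $\Phi'>0$, and one has to invoke either the classical Evans--Krylov theorem after a careful rewriting of the equation (as in \cite{Sch05}) or an argument based on applying Krylov--Safonov directly to the linearised equations, exploiting the uniform two-sided bound on $H$ to keep the coefficients under control.
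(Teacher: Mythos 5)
Your overall architecture matches the paper's: reduce the upper bound on $T$ to Proposition~\ref{minH is increasing}, assume $|A|^2$ stays bounded, promote $C^0$ convergence of $X(\cdot,t)$ to $C^\infty$ convergence via parabolic regularity, and contradict maximality of $T$ by restarting the flow at $T$. However, you explicitly flag but do \emph{not} resolve the one step that actually carries the weight of the argument, the uniform $C^{2,\alpha}$ estimate, and the way you frame the two fallback options suggests you are missing the paper's actual device. Evans--Krylov would require concavity (or convexity) of the operator in $D^2u$, which is simply unavailable for a general $\Phi$ with $\Phi'>0$; and ``applying Krylov--Safonov to the linearised equations'' is not what is done either.

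What the paper does is split into two cases according to the sign of $\Phi''$. When $-\tfrac{2\Phi'}{H}\le\Phi''<0$, the speed $\Phi(H)$ \emph{is} concave as a function of the Weingarten map, and one can quote concave fully-nonlinear theory directly. When $\Phi''\ge 0$, concavity fails, and the key observation is algebraic rather than a linearisation: using $H\,|\xi|=g^{ij}D_{ij}u$ in a local graph representation, the evolution $\partial_t u=\Phi(H)\,|\xi|$ is rewritten \emph{exactly} as
\[
\partial_t u=a^{ij}D_iD_j u,\qquad a^{ij}=g^{ij}\,\frac{\Phi(H)}{H},
\]
i.e.\ as an equation that is linear in $D^2u$ with coefficients $a^{ij}$ built from lower-order data. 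The uniform two-sided bounds $0<H_{\min}(0)\le H\le\sqrt{n\,C_0}$ and the $C^1$ control make $a^{ij}$ uniformly elliptic and in $C^\alpha$ in space--time, so linear Krylov--Safonov/Schauder theory yields the missing $C^{2,\alpha}$ estimate without any concavity hypothesis. This rewriting is a genuinely different mechanism from either Evans--Krylov or a linearisation argument, and it is the crux of the proof; until you supply it (or an equivalent), your proposal has a real gap at precisely the point you correctly identified as the main obstacle.

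Two minor points: the passage from ``$\limsup_{t\to T}\max_{M_t}|A|^2=+\infty$'' to ``$\max_{M_t}|A|^2\to+\infty$'' is not automatic from continuity alone and is also not claimed in that strengthened form by the paper, so you should either drop it or supply an argument (e.g.\ via the monotonicity coming from Proposition~\ref{minH is increasing} and the ODE comparison). Also note that the theorem as stated only hypothesises $\Phi'>0$; the paper's regularity argument in fact uses the further structural assumptions on $\Phi''$ from Assumptions~\ref{conditions on Phi}, which is worth making explicit if you want the case split to be available.
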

\begin{proof}
The estimates on the maximal time $T$ of existence can be easily
derived from Proposition \ref{minH is increasing}.
To complete the proof of the theorem, assume that
$\bigl|A\bigr|^{2}$ remains bounded on the interval $[0, T)$, and
derive a contradiction. Then the evolution equation \eqref{eqation:1.1}
implies that
\[
\left|\mathrm{X(p,\sigma)}-\mathrm{X(p,\tau)}\right| \leq
\int_{\tau}^{\sigma}\Phi(H)\left(p,t\right)\dif t
\]
for $0\leq \tau \leq \sigma < T$. Since $H$ is bounded from the bound for $\bigl|A\bigr|^{2}$
and the $\Phi$ is a smooth increasing function,
$\mathrm{X(\cdot,t)}$ tends to a unique continuous limit
$\mathrm{X(\cdot,T)}$ as $t\rightarrow T$.
For any $t \in [0, T)$, this implies
the uniform $C^{2}$-estimates for these hypersurface. In order to conclude that
$\mathrm{X(\cdot,T)}$ represents a hypersurface $M_{T}$, next under
this assumption and in view of the evolution equation \eqref{metic evolution for Phih flow}
the induced metric $g$ remains comparable to a fix smooth metric
$\tilde{g}$ on $M^{n}$:
\[
\left|\frac{\partial}{\partial
t}\left(\frac{g(u,u)}{\tilde{g}(u,u)}\right)\right|
=\left|\frac{\partial_{t}
g(u,u)}{g(u,u)}\frac{g(u,u)}{\tilde{g}(u,u)}\right| \leq
2|\Phi(H)||A|_{g}\frac{g(u,u)}{\tilde{g}(u,u)},
\]
for any non-zero vector $u\in T M^{n}$,  so that ratio of lengths is
controlled above and below by exponential functions of time, and
hence since the time interval is bounded, there exists a positive
constant $C$ such that
\[
\frac{1}{C}\tilde{g}\leq g\leq C\tilde{g}.
\]
Then the metrics $g(t)$ for all different times are equivalent, and
they converge as $t\rightarrow T$ uniformly to a positive definite
metric tensor $g(T)$ which is continuous and also equivalent by
following Hamilton's ideas in \cite{Ham82}.

 For $\alpha>0$ the uniform $C^{2,\alpha}$-estimates can be obtained for these
hypersurfaces as follows:  For $-\frac{2\Phi'}{H}\leq \Phi''< 0$,
the speed $\Phi(H)$ is concave in $h_{i}^{j}$ and in this case with  the uniform
$C^{2,\alpha}$-bounds are known in general for operators with
concave (see \cite {Lie}, Theorem 2, Chapter 5.5, or also see \cite
{Kry}). For $\Phi''\geq 0$,  $M^{i}_{t}$ can be locally reparameterized
as graphs given by a height function $u$. From \eqref{eqation:1.1} and
\eqref{param outward unit normal2}, a short computation yields
that height function $u$ satisfies the following parabolic PDE
\begin{equation}\label{graph represent for PhiH}
\partial_{t}u=\Phi(H)\left|\xi\right|,
\end{equation}
 where the mean curvature $H$ and the outward normal vector length $\left|\xi\right|$
 are given by the expressions \eqref{H on graph} and \eqref{normal vector for graph},
 respectively. The function $\Phi(H)$ in the coordinate system
 under consideration is a function of $D^{2}u$ and $Du$. Since
 $H(\cdot,t)$ is larger than $H_{\min}(0)$ and bounded above by our assumption on $\bigl|A\bigr|^{2}$,
 this implies that $\frac{\Phi(H)}{H}$  is also uniformly H\"older
 continuous functions in space and time.
 Using this, we can write equation \eqref{graph represent for PhiH}
 as a linear, strictly parabolic
partial differential equation
\begin{equation}\label{equation:4.4}
\partial_{t}u=a^{ij}D_{i}D_{j}u,
\end{equation}
with coefficients given by
\[
a^{ij}=g^{ij}\frac{\Phi(H)}{H},
\]
in $C^{\alpha}$ in space and time. The interior Schauder estimates by  the general
theory of Krylov and Safonov \cite{Kry}, \cite{Lie} lead to
$C^{2,\alpha}$-estimates. In both cases, i.e. $-\frac{2\Phi'}{H}\leq \Phi''< 0$, and
$ \Phi''> 0$, such a property implies all the higher order estimates by
using standard linearization and bootstrap techniques (see
\cite{Kry}, \cite{Lie}).
 it is enough to imply bounds on all derivatives of $X$. Therefore the
hypersurfaces $M_{t}$ converge to a smooth limit hypersurface
$M_{T}$. Finally, applying the local existence result with initial
data $X(\cdot,t)$, the solution can be continued to a later times,
contradicting the maximality of $T$. This completes the proof of
Theorem 5.1.
\end{proof}

\begin{example}
For the evolution of a  sphere $\mathcal {S}_{0}$ with
a radius $R_{0}$ and the origin point of ${\mathbb{R}}^{n+1}$
its center under the flow \eqref{eqation:1.1}. Since in the sphere case our flow preserves the symmetry,
the equation \eqref{eqation:1.1} reduces to the following ODE for the radius of the spheres
\begin{equation*}
\left\{
\begin{array}{ll}
\frac{\dif R\left(t\right)}{\dif t}= -\Phi\left(\frac{n }{R\left(t\right)}\right),\\[2ex]
R(0)=R_{0}.
\end{array}\right.
\end{equation*}
A straightforward analysis for the existence of solution of the
above ODE implies that the evolving spheres $\mathcal
{S}_{t}$ with radii $R(t)$ contract to the center of the $\mathcal
{S}_{0}$ satisfying
\[
R(t)=n G^{-1}\left(G\left(\frac{R(0)}{n}
\right)-\frac{t}{n}\right),
\]
on a finite maximal existence time $[0,T)$,
where $T$ is given by
\[
T=n \left(G\left(\frac{R(0)}{n}\right)\right).
\]
\end{example}

\section{Preserving convexity}
With the notations of Theorem\,1.2, this section shall show that
convex hypersurface  remains so under the $\Phi(H)$-flow.

To show that convexity of $M_{t}$ is preserved, next consider the
evolution of $\lambda_{\min}:=\min_{M_{t}}\lambda_{i}$ as in
Chap.\,3 of \cite{Hei01}. In order to do so, define a smooth
approximation $\mathcal {A}$ to $\max(x_{1},\ldots,x_{n})$ as
follows: for $\delta
>0$ let
\begin{equation}\label{equation:4.1}
\begin{split}
\mathcal  {A}_{2}(x_{1},x_{2})&=\frac{x_{1}+x_{2}}{2}
+\sqrt{\Big(\frac{x_{1}-x_{2}}{2}\Big)^{2}+\delta^{2}},\\
\mathcal  {A}_{n+1}(x_{1},\ldots,x_{n+1})&=\frac{1}{n+1}
\sum_{i=1}^{n+1}\mathcal  {A}_{2}(x_{i},\mathcal
{A}_{n}(x_{1},\ldots,\hat{x}_{i},\ldots,x_{n+1}),\quad n\geq 2.
\end{split}
\end{equation}
The approximation has the following properties, for a proof see
(\cite{Hei01}, Lemma\,3.3).

\begin{lemma}\label{approxiamtion f}
For $n\geq 2$ and $\delta
>0$,
\begin{itemize}
\item[i)] $\mathcal {A}_{n}(x_{1},\ldots,x_{n})$ is
smooth,monotonically increasing and convex,
\item[ii)] $\max\{x_{1},\ldots,x_{n}\}\leq \mathcal
{A}_{n}(x_{1},\ldots,\ldots,x_{n})\leq
\max\{x_{1},\ldots,x_{n}\}+(n-1)\delta$,
\item[iii)] $\frac{\partial
\mathcal {A}_{n}(x_{1},\ldots,x_{n})}{\partial x_{i}}\leq 1$,
\item[iv)] $\mathcal {A}_{n}(x_{1},\ldots,x_{n})-(n-1)\delta \leq
\sum_{i=1}^{n}\frac{\partial \mathcal
{A}_{n}(x_{1},\ldots,x_{n})}{\partial x_{i}} x_{i}\leq \mathcal
{A}_{n}(x_{1},\ldots,x_{n})$,
\item[v)] $\sum_{i=1}^{n}\frac{\partial \mathcal
{A}_{n}(x_{1},\ldots,x_{n})}{\partial x_{i}}=1$.
\end{itemize}
\end{lemma}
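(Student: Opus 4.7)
The plan is to prove all five properties simultaneously by induction on $n$, with the base case $n=2$ handled by direct computation using the explicit formula for $\mathcal{A}_2$.

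For the base case, write $D := \left(\frac{x_1-x_2}{2}\right)^2 + \delta^2 \geq \delta^2 > 0$ so that $\sqrt{D}$ is smooth, which immediately yields i) smoothness; convexity then follows since the map $(x_1,x_2)\mapsto \sqrt{D}$ is the composition of a convex positive definite quadratic form with $\sqrt{\,\cdot\,}$ (or by computing the Hessian directly). A quick calculation gives $\partial_1 \mathcal{A}_2 = \tfrac{1}{2}+\tfrac{x_1-x_2}{4\sqrt{D}}$ and $\partial_2 \mathcal{A}_2 = \tfrac{1}{2}-\tfrac{x_1-x_2}{4\sqrt{D}}$; the inequality $|x_1-x_2|/2 < \sqrt{D}$ shows both partials lie in $(0,1)$, establishing monotonicity in i), bound iii), and identity v) (they sum to $1$). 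For ii), assume WLOG $x_1\geq x_2$ and apply $a \leq \sqrt{a^2+\delta^2}\leq a+\delta$ with $a = \tfrac{x_1-x_2}{2}\geq 0$. For iv), subtract: $\mathcal{A}_2 - \sum_i x_i\,\partial_i \mathcal{A}_2 = \sqrt{D} - \tfrac{(x_1-x_2)^2}{4\sqrt{D}} = \tfrac{\delta^2}{\sqrt{D}}$, which lies in $[0,\delta]$, matching $(n-1)\delta = \delta$.

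For the induction step, assume i)-v) hold for $\mathcal{A}_n$ and use the recursive definition. Smoothness, monotonicity and convexity of $\mathcal{A}_{n+1}$ follow from the base case (already verified for $\mathcal{A}_2$) combined with the inductive hypothesis: $\mathcal{A}_{n+1}$ is the average of compositions of $\mathcal{A}_2$ with smooth, increasing, convex functions, and the composition of a monotone convex function with a convex function is convex. For ii), apply the base-case bound inside each summand of the recursion and use that $\max\{x_i,\max_{j\neq i}x_j\} = \max\{x_1,\ldots,x_{n+1}\}$, so the error accumulates by at most $\delta$ from $\mathcal{A}_2$ plus $(n-1)\delta$ from $\mathcal{A}_n$, giving $n\delta$ as required. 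Property v) propagates via the chain rule: each inner $\mathcal{A}_n$ contributes partials summing to $1$ by inductive hypothesis, and the two partials of $\mathcal{A}_2$ also sum to $1$, so a short combinatorial sum over the $n+1$ terms yields $\sum_i \partial_i \mathcal{A}_{n+1} = 1$. Property iii) then follows from v) together with the positivity of every partial.

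The subtlest step will be iv), the Euler-type estimate, in the inductive stage. The cleanest way to handle it is to note that the defect $E_n(x) := \mathcal{A}_n(x) - \sum_i x_i\,\partial_i \mathcal{A}_n(x)$ satisfies, via convexity, $0 \leq E_n(x) \leq \mathcal{A}_n(0,\ldots,0)$ (for a convex function $\mathcal{A}_n$ with $\mathcal{A}_n\geq \max\geq 0$ on the relevant region this needs care). A more robust route is to track the defect through the recursion: using the chain rule on $\mathcal{A}_2(x_i, \mathcal{A}_n(\cdot))$ together with the base-case identity $\mathcal{A}_2 - x_1\partial_1\mathcal{A}_2 - y\,\partial_2\mathcal{A}_2 = \delta^2/\sqrt{D}\leq\delta$ applied at $y=\mathcal{A}_n(\hat{x_i})$, and invoking the inductive bound $\mathcal{A}_n - \sum_{j\neq i} x_j\,\partial_j\mathcal{A}_n \leq (n-1)\delta$, the defects combine to at most $\delta + (n-1)\delta = n\delta$, which is exactly the required upper bound for $\mathcal{A}_{n+1}$. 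The lower bound $0$ follows from convexity plus $\mathcal{A}_{n+1}\geq\max\geq 0$-type reasoning or equivalently a direct sign check. This is essentially the argument in \cite{Hei01}, Lemma 3.3, which the paper cites; the main obstacle is simply the bookkeeping of the averaging in the recursion, not any analytic difficulty.
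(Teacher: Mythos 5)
The paper does not actually prove this lemma; it simply states the approximation $\mathcal{A}_n$ and refers the reader to Heidusch's thesis (\cite{Hei01}, Lemma~3.3) for the verification. Your proposal, therefore, is not being measured against an in-paper argument but against the cited source, and as such it reconstructs exactly the expected inductive strategy: verify i)--v) explicitly for $\mathcal{A}_2$, then propagate each property through the recursive averaging that defines $\mathcal{A}_{n+1}$. The base-case computations (partials in $(0,1)$ summing to $1$, the defect $\mathcal{A}_2 - x_1\partial_1\mathcal{A}_2 - x_2\partial_2\mathcal{A}_2 = \delta^2/\sqrt{D}\in(0,\delta]$, and the elementary bound $a\le\sqrt{a^2+\delta^2}\le a+\delta$) are all correct, and the inductive step for iv) via tracking defects --- writing the defect of $\mathcal{A}_{n+1}$ as the average of $\delta^2/\sqrt{D_i} + \partial_2\mathcal{A}_2\cdot E_n^{(i)}$ with $E_n^{(i)}\in[0,(n-1)\delta]$ and $\partial_2\mathcal{A}_2\in(0,1)$ --- is exactly right and gives the bound $[0,n\delta]$.

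One small imprecision worth fixing: you justify convexity of $\mathcal{A}_2$ by saying $\sqrt{D}$ is ``the composition of a convex positive definite quadratic form with $\sqrt{\cdot}$.'' That reasoning is invalid as stated, because $\sqrt{\cdot}$ is concave and a concave-increasing function composed with a convex function need not be convex. The correct one-line justification is that $\sqrt{D}=\big\|\big(\tfrac{x_1-x_2}{2},\delta\big)\big\|_2$ is the Euclidean norm of an affine map of $(x_1,x_2)$, hence convex; alternatively your parenthetical Hessian computation works, since the Hessian is $\frac{\delta^2}{4D^{3/2}}\left(\begin{smallmatrix}1&-1\\-1&1\end{smallmatrix}\right)\ge 0$. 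With that clarification the proof is complete and correct.
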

Schulze in \cite{Sch05} proved that the minimal principal curvatures
of the hypersurfaces under the $H^{\beta}$-flow is increasing by
applying the properties of $\mathcal {A}_{n}$, which is also valid
for the $\Phi(H)$-flow .
\begin{lemma} \label{preserving strict convex}
For $\Phi'>0$, $ \Phi''\geq -\frac{2\Phi'}{H}$ let $M_{t}$ be a solution of the $\Phi(H)$-flow \eqref{eqation:1.1}.
Suppose the initial hypersurface
$M_{0}$ is strictly  convex. Then all $M_{t}$ are also strictly
 convex and $\lambda_{\min}(t)$ is monotonically increasing for
$t>0$.
\end{lemma}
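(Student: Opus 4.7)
The plan is to apply the maximum principle to a smooth symmetric approximation of the largest principal radius $\tau_{\max}=1/\lambda_{\min}$, following Schulze's strategy \cite{Sch05}. For $\delta>0$, let $F^\delta := \mathcal{A}_n(\tau_1,\ldots,\tau_n)$, where $\tau_i=1/\lambda_i$ are the principal radii (eigenvalues of the inverse Weingarten map $b^i_j$) and $\mathcal{A}_n$ is the smooth approximation in \eqref{equation:4.1}. By Lemma \ref{approxiamtion f}(i), $F^\delta$ is smooth, symmetric, monotonically increasing, and convex in its arguments, so, by Davis's characterisation or equivalently by working in a frame that diagonalises $b$ at the maximum point, it descends to a smooth convex function on the space of symmetric matrices whose matrix derivative $\partial F^\delta/\partial b$ is positive semidefinite.

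The first step is to turn the matrix evolution inequality from Lemma \ref{priciple radii evolution},
\begin{equation*}
\partial_t b^i_j \leq \Delta_{\dot\Phi} b^i_j + (\Phi' H - \Phi)\delta^i_j - \Phi'\bigl|A\bigr|^2 b^i_j,
\end{equation*}
into a scalar inequality for $F^\delta$. Contracting with the positive semidefinite tensor $\partial F^\delta/\partial b$ and using convexity of $F^\delta$ to get $(\partial F^\delta/\partial b^i_j)\Delta b^i_j \leq \Delta F^\delta$, then evaluating the trace in a diagonal frame of $b$ and invoking properties (iv) and (v) of Lemma \ref{approxiamtion f}, one obtains
\begin{equation*}
\partial_t F^\delta \leq \Delta_{\dot\Phi} F^\delta + (\Phi' H - \Phi) - \Phi'\bigl|A\bigr|^2 \bigl[F^\delta - (n-1)\delta\bigr].
\end{equation*}

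Next, at a spatial maximum point $x^*(t)$ of $F^\delta$ the term $\Delta_{\dot\Phi}F^\delta$ is nonpositive. The elementary bounds $|A|^2 \geq H^2/n$ and $\lambda_{\min}\leq H/n$ yield $|A|^2 F^\delta_{\max} \geq |A|^2/\lambda_{\min} \geq H$ at $x^*$, so
\begin{equation*}
\frac{d}{dt} F^\delta_{\max} \leq -\Phi(H) + \Phi'(H)\bigl|A\bigr|^2 (n-1)\delta.
\end{equation*}
On any subinterval $[0,T']\subset[0,T)$, Proposition \ref{minH is increasing} and Theorem \ref{the long time existence for PhiH} give uniform control of $H$, $|A|^2$, $\Phi$, $\Phi'$ with $\Phi(H)\geq \Phi(H_{\min}(0))>0$; fixing $\delta$ small enough relative to these bounds renders the right hand side strictly negative, so $F^\delta_{\max}$ is strictly decreasing on $[0,T']$.

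To conclude, Lemma \ref{approxiamtion f}(ii) sandwiches $\max_i \tau_i \leq F^\delta \leq \max_i \tau_i + (n-1)\delta$, so for any $0\leq t_1\leq t_2 <T$,
\begin{equation*}
\max_{M_{t_2}} 1/\lambda_{\min} \leq F^\delta_{\max}(t_2) \leq F^\delta_{\max}(t_1) \leq \max_{M_{t_1}} 1/\lambda_{\min} + (n-1)\delta,
\end{equation*}
and letting $\delta\to 0$ shows that $t\mapsto\max_{M_t}(1/\lambda_{\min})$ is non-increasing. Hence $\lambda_{\min}(t)\geq \lambda_{\min}(0)>0$ for all $t\in[0,T)$, which preserves strict convexity and gives monotonicity of $\lambda_{\min}$. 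The main anticipated obstacle is the matrix-to-scalar reduction: one has to justify carefully that $\mathcal{A}_n$, extended via eigenvalues to symmetric matrices, inherits both the matrix-monotonicity needed to preserve the direction of the inequality of Lemma \ref{priciple radii evolution} when contracted with $\partial F^\delta/\partial b$, and the matrix-convexity needed for the Laplacian bound $(\partial F^\delta/\partial b^i_j)\Delta b^i_j \leq \Delta F^\delta$ to hold globally rather than merely in a diagonalising frame.
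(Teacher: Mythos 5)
Your proof is correct, and it takes a genuinely different and arguably cleaner route than the paper's. The paper first splits on the sign of $\Phi''$: for $\Phi'' \geq 0$ it applies the maximum principle directly to $\lambda_{\min}$ via the evolution of $h^i_j$, and only for $-2\Phi'/H \leq \Phi'' < 0$ does it pass to $b^i_j$ and the approximation $\mathscr{A}$. Within the second case it splits further on the sign of $\Phi' H - \Phi$ (Cases 2.1 and 2.2): in Case 2.1 it drops both reaction terms outright, while in Case 2.2 it rearranges them into the product $\mathscr{A}(\Phi' - \Phi/H)(H/\mathscr{A} - |A|^2)$ and uses $H/\mathscr{A} \leq H\lambda_{\min} \leq |A|^2$ together with the moving threshold $(n-1)\frac{H\Phi'}{\Phi}\delta$. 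You instead work exclusively with $b^i_j$ (which is legitimate since Lemma \ref{priciple radii evolution} already covers the whole range $\Phi'' \geq -2\Phi'/H$) and absorb the entire $\Phi' H$ term by the single pointwise estimate $\Phi'|A|^2 F^\delta_{\max} \geq \Phi' H$ at the spatial maximum, reducing the reaction terms directly to $-\Phi + (n-1)\delta\,\Phi'|A|^2$. This avoids both case splits and replaces the paper's somewhat delicate ``first maximum exceeds a $t$-dependent threshold'' argument by a uniform-in-$t$ choice of $\delta$ on compact subintervals, using bounds already available from Proposition \ref{minH is increasing} and Theorem \ref{the long time existence for PhiH}. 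The cost is that you need the a priori compactness of $|A|^2$ and $H$ on $[0,T']$ to pick $\delta$ uniformly — which you correctly invoke — whereas the paper's Case 2.1 argument is purely pointwise.

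Two remarks that apply equally to your write-up and to the paper's, so they are not gaps specific to you: (i) using $b^i_j$ presupposes strict convexity at time $t$, so the argument implicitly runs on the maximal interval on which $M_t$ stays strictly convex and then uses the lower bound $\lambda_{\min}(t) \geq \lambda_{\min}(0) > 0$ to show this interval is all of $[0,T)$ by openness; (ii) passing from pointwise eigenvalue convexity/monotonicity of $\mathcal{A}_n$ to the matrix inequalities is the standard Davis-type fact you flag at the end — it is routine, but you are right that it is the one place where a careless reduction to a diagonalising frame can go wrong, so your caution is warranted rather than a defect.
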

\begin{proof} Firstly, note that
Proposition \ref{minH is increasing} ensures that $H$ preserved positivity in time .

{\bf Case 1. } For $\Phi''\geq 0$, using a frame which diagonalises
${\mathscr{W}}$, consider the evolution of
${\lambda}_{\min}(t)$ in the evolution equation \eqref{the evolution for weigarten}
of ${\mathscr{W}}$. Then
\begin{equation}
\begin{split}
\partial_{t}{\lambda}_{\min}\left(p,t\right)
&\geq \Phi'\Delta
{\lambda}_{\min}\left(p,t\right)
-(\Phi' H-\Phi){\lambda}^{2}_{\min}\left(p,t\right)+\bigl|{A}\bigr|^{2}\Phi'{\lambda}_{\min}\left(p,t\right)\\
&=\Phi'\Delta {\lambda}_{\min}\left(p,t\right)
+\Phi{\lambda}^{2}_{\min}\left(p,t\right)+\Phi'\left[\bigl|{A}\bigr|^{2}({\lambda}_{\min}\left(p,t\right))-H{\lambda}^{2}_{\min}\left(p,t\right)\right].
\end{split}
\end{equation}
The part in the square brackets is nonnegative by the estimate $\bigl|{A}\bigr|^{2}\geq
{H}{\lambda}_{\min}$. Then
the maximum principle shows the desired result.

{\bf Case 2. } For $-\frac{2\Phi'}{H}\leq \Phi''< 0$,  observe that the gradient term has the wrong
sign, we have to work a little bit more as in \cite{Sch05}. For a
fixed $\delta> 0$ now choose a smooth approximation
$\mathscr{A}({b}_{i}^{j}):=\mathcal
{A}_{n}(\theta_{1},\ldots,\theta_{n})$ to
$\max(\theta_{1},\ldots,\theta_{n})$ , as defined in \eqref{equation:4.1},
where the $\theta_{i}$ are the eigenvalues of ${b}_{i}^{j}$,
i.e. $\theta_{i}=1/{\lambda}_{i}$. By the chain rule
\[
\partial_{t}\mathscr{A}=\frac{\partial\mathscr{A}}{\partial {b}_{i}^{j}}
\frac{\partial{b}_{i}^{j}}{\partial t}  \quad and \quad \Delta
\mathscr{A} =\frac{\partial\mathscr{A}}{\partial
{b}_{i}^{j}}\Delta {b}_{i}^{j}
+\frac{\partial^{2}\mathscr{A}}{\partial {b}_{r}^{s}\partial
{b}_{t}^{u}}\nabla^{v}{b}_{r}^{s}\nabla_{v}{b}_{t}^{u},
\]
grouping the two identities and applying Lemma {priciple radii evolution} $\mathscr{A}$
satisfies the following evolution inequality:
\begin{align*}
\partial_{t}\mathscr{A}
&\leq \Phi'\Delta \mathscr{A} -\Phi'\frac{\partial^{2}\mathscr{A}}{\partial
{b}_{r}^{s}\partial
{b}_{t}^{u}}\nabla^{v}{b}_{r}^{s}\nabla_{v}{b}_{t}^{u}
+(\Phi'H-\Phi)\tr\left(\frac{\partial\mathscr{A}}{\partial
{b}_{i}^{j}}\right)-\Phi'\bigl|A\bigr|^{2}\frac{\partial\mathscr{A}}{\partial
{b}_{i}^{j}}{b}_{i}^{j}.
\end{align*}
The various terms on the  right hand side of this inequality can be easily
estimated: First, in view of Lemma \ref{approxiamtion f} $i)$ convexity of $\mathcal
{A}$ implies convexity of $\mathscr{A}$, then the second term can be
estimated by
\[
-\Phi'\frac{\partial^{2}\mathscr{A}}{\partial
{b}_{r}^{s}\partial
{b}_{t}^{u}}\nabla^{v}{b}_{r}^{s}\nabla_{v}{b}_{t}^{u}
\leq 0.
\]
Using Lemma \ref{approxiamtion f} $v)$,  the third
term can be estimated by
\[
(\Phi'H-\Phi).
\]
Lemma \ref{approxiamtion f} $iv)$ implies that the next term can be estimated by
\[
-\Phi'\bigl|{A}\bigr|^{2}(\mathscr{A}-(n-1)\delta).
\]
The following estimate is obtained:
\begin{align}\label{inverse weigargen evolution}
\partial_{t}\mathscr{A}
\leq \Phi'\Delta \mathscr{A}
+(\Phi'H-\Phi)
-\Phi'\bigl|{A}\bigr|^{2}(\mathscr{A}-(n-1)\delta).
\end{align}
{\bf Case 2.1. } for $\Phi'H-\Phi\leq 0$,
at a point $\left(p,t\right)$ with $\mathscr{A}-(n-1)\delta>0$,
this estimate \eqref{inverse weigargen evolution} gives the following estimate
\begin{align*}
\partial_{t}\mathscr{A}
\leq \Phi'\Delta \mathscr{A},
\end{align*}
which gives a contradiction if $\mathscr{A}$ attains a first maximum
larger than $(n-1)\delta$. The limit as $\delta$ is approached
to $0$ then implies the conclusion of the Lemma.
{\bf Case 2.2. } for $\Phi'H-\Phi>0$, at a point $\left(p,t\right)$ with $\mathscr{A}-(n-1)\frac{H\Phi'}{\Phi}\delta>0$,
\begin{align*}
\partial_{t}\mathscr{A}
&\leq \Phi'\Delta \mathscr{A}
+(\Phi'H-\Phi)
-\Phi'\bigl|{A}\bigr|^{2}(\mathscr{A}-(n-1)\delta)\\
&\leq \Phi'\Delta \mathscr{A}
+\left(\Phi'H-\Phi'\bigl|{A}\bigr|^{2}\mathscr{A}\right)
+\bigl|{A}\bigr|^{2}\frac{\mathscr{A}\Phi}{H}-\Phi\\
&= \Phi'\Delta \mathscr{A}
+\mathscr{A}\left(\Phi'-\frac{\Phi}{H}\right)\left(\frac{H}{\mathscr{A}}
-\bigl|{A}\bigr|^{2}\right).
\end{align*}
since\[\Phi'H-\Phi>0\] and
\[
\frac{H}{\mathscr{A}} \leq \frac{H}{\theta_{\max}}
=H{\lambda}_{\min}\leq \bigl|A\bigr|^{2},
\]
this gives
\begin{align*}
\partial_{t}\mathscr{A}
\leq \Phi'\Delta \mathscr{A},
\end{align*}
which gives a contradiction if $\mathscr{A}$ attains a first maximum
larger than $(n-1)\frac{H\Phi'}{\Phi}\delta$. The limit as $\delta$ is approached
to $0$ then implies the conclusion of the Lemma.
\end{proof}

\begin{corollary}\label{preserving weak convex}
Let $\mathrm{X}:M^{n}\times [0,T)\rightarrow
{\mathbb{R}}^{n+1}$ be a $\Phi(H)$-flow of strictly
 convex hypersurfaces. Then
\[
\bigl|A\bigr|\left(p,t\right)\leq H\left(p,t\right)
\leq
G^{-1}\left(G\left(\frac{1}{H_{\max}(0)}
\right)-t\right).
\]
\end{corollary}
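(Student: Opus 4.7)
I would split the corollary into two independent inequalities; both follow from results already in hand.

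The pointwise bound $\bigl|A\bigr|\le H$ is a purely algebraic consequence of strict convexity. By Lemma~\ref{preserving strict convex} each $M_t$ is strictly convex, so the principal curvatures satisfy $\lambda_i(p,t)>0$, and
\[
H^{2}-\bigl|A\bigr|^{2}=\Big(\sum_{i=1}^{n}\lambda_i\Big)^{\!2}-\sum_{i=1}^{n}\lambda_i^{2}=2\!\!\sum_{1\le i<j\le n}\!\!\lambda_i\lambda_j\ge 0.
\]
Taking square roots gives $\bigl|A\bigr|\le H$ pointwise on $M^{n}\times[0,T)$.

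For the upper bound on $H$, I would apply the parabolic maximum principle to the evolution equation \eqref{the evolution for H},
\[
\partial_t H=\Delta_{\dot\Phi}H+\Phi''|\nabla H|^{2}+\bigl|A\bigr|^{2}\Phi.
\]
At any point where $H(\cdot,t)$ attains its spatial maximum one has $\nabla H=0$ and $\Delta_{\dot\Phi}H\le 0$, so the $\Phi''$--gradient term drops out independently of the sign of $\Phi''$. Combined with the pointwise estimate $\bigl|A\bigr|^{2}\le H^{2}$ just proved, this yields
\[
\frac{d}{dt}H_{\max}(t)\le H_{\max}^{2}(t)\,\Phi\bigl(H_{\max}(t)\bigr).
\]

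The comparison ODE $\dot\psi=\psi^{2}\Phi(\psi)$ with $\psi(0)=H_{\max}(0)$ can then be integrated explicitly: the substitution $\theta=1/\psi$, together with the definition $g(x)=1/\Phi(1/x)$, reduces it to $g(\theta)\dot\theta=-1$, i.e.\ $(G\circ\theta)'=-1$. Hence $\theta(t)=G^{-1}\bigl(G(1/H_{\max}(0))-t\bigr)$, and the standard ODE comparison principle yields
\[
H(p,t)\le H_{\max}(t)\le\psi(t)=\frac{1}{G^{-1}\bigl(G(1/H_{\max}(0))-t\bigr)},
\]
which is the claimed upper bound (mirroring Proposition~\ref{minH is increasing}). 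The only point requiring any care is that the comparison argument needs $H>0$ uniformly so that $\psi^{2}\Phi(\psi)$ is smooth and strictly monotone; this is ensured by Proposition~\ref{minH is increasing} together with $\Phi>0$ on $(0,+\infty)$, so no new geometric estimate is needed and the main obstacle is bookkeeping the reciprocal change of variables correctly.
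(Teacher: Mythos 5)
Your proof is correct and follows essentially the same route as the paper: strict convexity (from Lemma~\ref{preserving strict convex}) gives $\bigl|A\bigr|\le H$ pointwise, this is fed into \eqref{the evolution for H} to get $\partial_t H_{\max}\le\Phi(H_{\max})H_{\max}^2$, and the comparison ODE is integrated via $\theta=1/\psi$ exactly as in Proposition~\ref{minH is increasing}. One remark: your integration correctly yields $H\le\frac{1}{G^{-1}\bigl(G(1/H_{\max}(0))-t\bigr)}$, which is the mirror of the lower bound in Proposition~\ref{minH is increasing}; the corollary as printed (and the paper's own proof of it) drops the outer reciprocal and states the bound as $G^{-1}(\cdots)$, which appears to be a typographical slip in the paper rather than an error in your argument, so you should not claim your formula "is the claimed upper bound" verbatim but rather flag the discrepancy.
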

\begin{proof}
Lemma \ref{preserving strict convex} implies that if $M_{0}$ is strictly  convex, under the
flow \eqref{eqation:1.1}, $M_{t}$ is strictly  convex as long as it
exists, then $\bigl|A\bigr|\leq H$, which implies that from the
evolution equation \eqref{the evolution for H} of $H$
\[
\partial_{t}{H}_{\max}\leq
\Phi H_{\max}^{2}.
\]
Now let $\phi$ be the solution of the ODE
\[
\left\{
\begin{array}{ll}
\frac{\dif\phi}{\dif t}=\Phi(\phi)\phi^{2},\\[2ex]
\phi(0)={H}_{\max}(0),
\end{array}\right.
\]
then by the maximum principle
\[
H\leq \phi \qquad \text{on} \qquad  0\leq t \leq T.
\]
On the other hand $\phi$ is explicitly given by
\[
\phi(t)=G^{-1}\left(G\left(\frac{1}{H_{\max}(0)}
\right)-t\right).
\]
Thus, this gives the desired estimate.
\end{proof}

\begin{corollary}\label{bound for maxH}
Let $\mathrm{X}:M^{n}\times [0,T)\rightarrow
{\mathbb{R}}^{n+1}$ be a $\Phi(H)$-flow of weakly
 convex hypersurfaces. Then $M_{t}$ is weakly  convex for all $t
\in [0, T)$ and $T_{\max}\geq G\left(\frac{1}{H_{\max}(0)}\right)$.
\end{corollary}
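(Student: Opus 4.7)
The strategy is to run two parallel maximum-principle arguments: one on the smallest principal curvature $\lambda_{\min}$ to propagate weak convexity forward in time, and one on $H_{\max}$ to obtain an explicit ODE comparison that bounds the existence time from below.

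For the first step, I would simply replay Case 1 of the proof of Lemma \ref{preserving strict convex}, which is applicable since Case ii) assumes $\Phi''\geq 0$. Picking at each $(p,t)$ a frame that diagonalises $\mathscr{W}$ and inspecting \eqref{the evolution for weigarten}, the smallest eigenvalue satisfies, at least in a standard generalised sense,
\begin{align*}
\partial_{t}\lambda_{\min}
\geq \Phi'\Delta \lambda_{\min}
+\Phi\,\lambda_{\min}^{2}
+\Phi'\bigl[\bigl|A\bigr|^{2}\lambda_{\min}-H\,\lambda_{\min}^{2}\bigr].
\end{align*}
When all $\lambda_i\geq 0$, the identity $\bigl|A\bigr|^{2}-H\lambda_{\min}=\sum_{i}\lambda_{i}(\lambda_{i}-\lambda_{\min})\geq 0$ makes the bracket non-negative, so both zeroth-order terms vanish at the level $\lambda_{\min}=0$. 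The parabolic maximum principle then prevents $\lambda_{\min}$ from ever dropping below its initial value, and in particular $\lambda_{\min}(t)\geq \lambda_{\min}(0)\geq 0$, so $M_t$ stays weakly convex on its interval of existence.

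For the second step, weak convexity forces $\bigl|A\bigr|^{2}\leq H^{2}$ pointwise (each $\lambda_i\leq H$ once all $\lambda_i\geq 0$). Inserting this into \eqref{the evolution for H} gives
\begin{align*}
\partial_{t}H \leq \Delta_{\dot{\Phi}}H+\Phi''|\nabla H|^{2}+\Phi\,H^{2}.
\end{align*}
At a spatial maximum, $\nabla H=0$ and $\Delta H\leq 0$, so $\partial_{t}H_{\max}\leq \Phi(H_{\max})H_{\max}^{2}$. Comparing with the ODE $\phi'=\Phi(\phi)\phi^{2}$, $\phi(0)=H_{\max}(0)$, whose explicit solution is characterised by $G(1/\phi(t))=G(1/H_{\max}(0))-t$, the maximum principle supplies the bound
\begin{align*}
H_{\max}(t)\leq \phi(t)\qquad \text{for all }t<G\!\left(\frac{1}{H_{\max}(0)}\right).
\end{align*}
Consequently $H_{\max}$, and hence $\bigl|A\bigr|^{2}\leq H^{2}$, stays finite on every subinterval $[0,T']$ with $T'<G(1/H_{\max}(0))$. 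The blow-up criterion in Theorem \ref{the long time existence for PhiH} then forbids the flow from terminating before that time, giving $T_{\max}\geq G(1/H_{\max}(0))$.

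The only delicate point I expect is the rigorous application of the maximum principle to $\lambda_{\min}$: because the eigenvalues of $\mathscr{W}$ may collide, $\lambda_{\min}$ is only Lipschitz in general. I would resolve this exactly as in the proof of Lemma \ref{preserving strict convex}, either by using a smooth symmetric approximation of $\min(\theta_1,\dots,\theta_n)$ analogous to the functions $\mathcal{A}_n$ of \eqref{equation:4.1}, or by perturbing $M_0$ slightly into the strictly convex regime, applying Lemma \ref{preserving strict convex} to the perturbed data, and letting the perturbation vanish using the continuous dependence afforded by the short-time theory of Section 3.
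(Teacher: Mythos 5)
Your argument is correct, but it takes a genuinely different route from the paper. The paper's proof of Corollary~\ref{bound for maxH} does not touch $\lambda_{\min}$ at all: it smoothly approximates the weakly convex $M_{0}$ by strictly convex $M_{0}^{i}$ (flowing $M_0$ briefly by mean curvature flow), runs the $\Phi(H)$-flow on each $M_{0}^{i}$, invokes Lemma~\ref{preserving strict convex} and Corollary~\ref{preserving weak convex} to get strict convexity and a uniform lower bound on the existence times $T^{i}_{\max}$, and then extracts a subsequential limit using the uniform $C^{2,\alpha}$-estimates from the proof of Theorem~\ref{the long time existence for PhiH}. This has the advantage of covering the entire range $\Phi''\geq -2\Phi'/H$ of Lemma~\ref{preserving strict convex} uniformly, rather than just $\Phi''\geq 0$. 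Your direct maximum-principle argument on $\lambda_{\min}$ is cleaner and self-contained, but it is indeed limited to $\Phi''\geq 0$ (the $\Phi''|\nabla H|^{2}$-type term has the wrong sign otherwise), which you correctly flag; since the corollary is only deployed in case~ii) of Theorem~\ref{main result for Phi flow}, that restriction is harmless for the paper's purposes. Your Step~2 essentially reproves Corollary~\ref{preserving weak convex} under weak convexity, which is fine since the inequality $\bigl|A\bigr|^{2}\leq H^{2}$ needs only $\lambda_{i}\geq 0$. One small caveat: in your fallback remark, ``continuous dependence afforded by the short-time theory'' is not by itself enough to pass to the limit over the whole time interval $[0, G(1/H_{\max}(0)))$; one needs uniform higher-order estimates over the full interval, which is precisely what the $C^{2,\alpha}$-machinery in the proof of Theorem~\ref{the long time existence for PhiH} supplies and what the paper explicitly cites. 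Also note that the approximating surfaces have $H^{i}_{\max}(0)\to H_{\max}(0)$ but not $H^{i}_{\max}(0)=H_{\max}(0)$, so the ``uniform'' lower bound is really $T^{i}_{\max}\geq G(1/H^{i}_{\max}(0))$ followed by a limit; your direct approach avoids this bookkeeping entirely.
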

\begin{proof}
The initial surface $M_{0}$ can be smoothly approximated  by
strictly  convex hypersurfaces $M^{i}_{ 0}$, for example choosing the mean curvature flow.
Let these hypersurfaces move by $\Phi(H)$-flow,
 which by Lemma \ref{preserving strict convex} remain
strictly  convex.
By Theorem \ref{the long time existence for PhiH} and Corollary \ref{preserving weak convex}
we have a uniform lower bound $T_{\max}^{i}\geq G\left(\frac{1}{H_{\max}(0)}\right)$.
Using the uniform
$C^{2,\alpha}$-estimates from the proof of
Theorem \ref{the long time existence for PhiH}, one can extract a convergent subsequence of
strictly convex flows which implies the original flow also had
to be  convex.
\end{proof}

In the case that $\Phi''>0$, $ \Phi'\geq \frac{\Phi}{H}\geq 0$,
the following Proposition shows that weakly convex hypersurfaces immediately become strictly convex along the $\Phi(H)$-flow in ${\mathbb{R}}^{n+1}$
by using Lemma \ref{evolution for the quotient of Qr}.
\begin{proposition}\label{weakly convex become strong convex for Phi flow}
For $\Phi''>0$, $ \Phi'\geq \frac{\Phi}{H}$, let $M_{t}$ be a solution of the $\Phi(H)$-flow
in ${\mathbb{R}}^{n+1}$. Suppose the initial hypersurface
$M_{0}$ is a weakly convex hypersurface with $H_{\min}(0) > 0$. Then
$M_{t}$ is strictly  convex for all $t\in[0,T)$.
\end{proposition}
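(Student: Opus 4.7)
The plan is to show by induction on $r \in \{1, \dots, n\}$ that every elementary symmetric function $E_r$ of the principal curvatures of $M_t$ is strictly positive on $M^n \times (0, T)$; the case $r = n$ then yields $\prod_i \lambda_i > 0$, which combined with the preservation of weak convexity forces every $\lambda_i > 0$, i.e.\ strict convexity.

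First I assemble standing facts. By Corollary \ref{bound for maxH} the flow preserves weak convexity, so $\lambda_i \geq 0$ and $E_r \geq 0$ on $M^n \times [0, T)$, and by Proposition \ref{minH is increasing} we have $H \geq H_{\min}(0) > 0$ throughout $[0, T)$, so $E_1 > 0$ on all of $M^n \times [0, T)$. A key auxiliary observation is that at each time $t$ the hypersurface $M_t$ is smooth, compact, embedded and weakly convex, hence bounds a convex body $K_t$; fixing any interior point $p_0 \in \mathrm{int}(K_t)$ and a point $p^*_t \in M_t$ realising $\max_{q \in M_t} |q - p_0|$, the circumscribed sphere of radius $R_t = |p^*_t - p_0|$ forces each principal curvature at $p^*_t$ to be at least $1/R_t > 0$, so $E_r(p^*_t, t) > 0$ for every $r$.

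Now I carry out the induction. The base case $r = 1$ is already established. For the inductive step, suppose $E_{r-1} > 0$ on $M^n \times (0, T)$ and fix $0 < \epsilon < \tau < T$. By continuity $E_{r-1}$ admits a positive lower bound on the compact slab $M^n \times [\epsilon, \tau]$, so $Q_r = E_r / E_{r-1}$ is smooth, non-negative and uniformly bounded there, and the $C^2$ estimates from Section~4 bound $H$ and $|A|^2$. Under the hypotheses $\Phi'' > 0$ and $\Phi' H \geq \Phi$, Lemma \ref{evolution for the quotient of Qr} applies and yields
\begin{equation*}
\partial_t Q_r \geq \Phi' \Delta Q_r + \bigl[\Phi' |A|^2 - r(\Phi' H - \Phi) Q_r\bigr] Q_r
\end{equation*}
on $M^n \times [\epsilon, \tau]$, with the bracketed coefficient uniformly bounded. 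The strong parabolic maximum principle therefore applies: if $Q_r$ vanished at some $(q_0, t_0)$ with $\epsilon < t_0 \leq \tau$, then $Q_r(\cdot, \epsilon) \equiv 0$ on $M^n$, contradicting $Q_r(p^*_\epsilon, \epsilon) > 0$. Consequently $Q_r > 0$, hence $E_r > 0$, on $M^n \times (\epsilon, \tau]$; letting $\epsilon \downarrow 0$ and $\tau \uparrow T$ completes the induction.

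The main obstacle I anticipate is the justification of the strong maximum principle step, since the evolution inequality for $Q_r$ is only quasilinear (the zeroth-order coefficient is itself quadratic in $Q_r$): localising to a compact subinterval $[\epsilon, \tau]$ to guarantee a priori boundedness of $Q_r$ is essential in order to treat the inequality as a linear parabolic one with bounded coefficients. A secondary, milder point is verifying at each time that $M_t$ bounds a convex body so that the round-point argument can be applied; this rests on the preservation of smoothness, embeddedness and weak convexity along the flow.
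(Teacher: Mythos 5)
Your proof is correct and follows essentially the same strategy as the paper: localise in time so that the zeroth-order coefficient in the evolution inequality for $Q_r$ from Lemma \ref{evolution for the quotient of Qr} becomes bounded, then invoke the strong parabolic maximum principle (the paper uses the equivalent Harnack inequality applied to $\omega=\e^{Ct}Q_2$) to conclude that a zero of $Q_r$ would force $Q_r\equiv 0$ at earlier times, contradicting the presence of a strictly convex point, and iterate over $r$. The only substantive addition is your circumscribed-sphere argument guaranteeing a strictly convex point on every time slice, which the paper asserts without proof; that is a welcome tightening of the same argument rather than a different route.
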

\begin{proof}
Since $H(t)\geq H_{\min}(0)>0$ for all  all $[0,T)$ along the $\Phi(H)$-flow,
${Q}_{2}$ is well-defined and  Corollary \ref{preserving weak convex} implies that
$M_{t}$ is weakly convex. Then an immediate consequence is
\[
{Q}_{2}=\frac{|{H}|^{2}-\bigl|{A}\bigr|^{2}}{2{H}}\geq
0.
\]
For $t\in [0,\varepsilon]$, $\varepsilon < T$, the bounds on
$\bigl|A\bigr|^{2}$ implies the bounds on
${Q}_{2}$ and $\Phi'$ which
implies
\[
\left[\Phi'\bigl|{A}\bigr|^{2}-r (\Phi' H-\Phi){Q}_{r}\right]{Q}_{2}
\leq C
\]
on this interval. An application of Lemma 3.4 for
$\omega:=\e^{Ct}{Q}_{2}$ shows the following estimate:
\[
\partial_{t}\omega \geq \Phi' \Delta \omega.
\]
Suppose that there exists $(p_{0},t_{0})\in
M^{n}\times(0,\varepsilon)$ with ${Q}_{2}(p_{0},t_{0})=0$,
then also $\omega(p_{0},t_{0})=0$. The Harnack's inequality in the
parabolic case (see i.e. \cite{Lie}) applied to the above equation
shows that $\omega\equiv 0$ for all $t\in (0,t_{0})$, i.e.
${Q}_{2}\equiv 0$, which is in contradiction to the existence
of strictly convex points on $M_{t}$, and so ${Q}_{2}>0$ on
$M^{n}\times(0,T)$. An iterative application of this yields that
${Q}_{r}>0$ on $M^{n}\times(0,T)$. This concludes the
Proposition.
\end{proof}

The following step want to show that the flow exists as long as it
bounds a non-vanishing volume. In order to achieve this, using a
trick of Tso \cite{Tso} for the $\mbox{Gau\ss}$ curvature flow, see
also \cite{And94}, \cite{Cab07} and \cite{McC}, study the evolution
under \eqref{eqation:1.1} of the following function
\begin{equation}\label{eqation:5.1}
Z_{t}=\frac{\Phi(H)}{\langle\mathrm{X},  \nu\rangle-\epsilon}.
\end{equation}
Here $\epsilon$ is a constant to be
chosen later.
\begin{corollary}\label{evoluton for Z}
For $t \in [0, T)$ and any constant $\epsilon$,
\begin{align*}
\partial_{t}Z=\Phi'\Delta Z
+\frac{2\Phi'}{\langle\mathrm{X},  \nu\rangle-\epsilon}\left\langle\nabla Z,\nabla\Phi\right\rangle +Z^{2} \left[\left(\frac{\Phi' H}{\Phi}+1\right)-\epsilon \frac{\Phi' \bigl|A\bigr|^{2}}{\Phi}\right].
\end{align*}
\end{corollary}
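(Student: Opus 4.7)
The plan is to apply the quotient rule to $Z = \Phi(H)/v$ with $v := \langle \mathrm{X},\nu\rangle - \epsilon$, assembling the result from the evolution equations \eqref{the evolution for H} for $H$ and \eqref{evolution for support function} for $\langle \mathrm{X},\nu\rangle$. Since both building blocks carry the same leading operator $\Phi'\Delta$, the ratio $Z$ inherits a parabolic equation of the same principal type; the real work is in organizing the lower-order terms and the gradient term generated by the quotient.

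First I evolve the numerator. Applying the chain rule to \eqref{the evolution for H} together with the identity $\Delta \Phi(H) = \Phi'\Delta H + \Phi''|\nabla H|^2$ absorbs the second-order gradient contribution into the Laplacian, yielding
\begin{equation*}
\partial_t \Phi(H) = \Phi'\,\Delta \Phi(H) + \Phi'|A|^2\Phi.
\end{equation*}
For the denominator, writing $\langle \mathrm{X},\nu\rangle = v+\epsilon$ in \eqref{evolution for support function} gives
\begin{equation*}
\partial_t v = \Phi'\Delta v + \Phi'|A|^2(v+\epsilon) - (\Phi' H + \Phi).
\end{equation*}

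Next I combine them via $\partial_t Z = v^{-1}\partial_t \Phi(H) - Zv^{-1}\partial_t v$, and convert the two Laplacian terms using the product identity $\Delta\Phi(H) = \Delta(Zv) = v\Delta Z + Z\Delta v + 2\langle \nabla Z,\nabla v\rangle$ (a consequence of $\Phi(H) = Zv$). This produces the principal part $\Phi'\Delta Z + (2\Phi'/v)\langle \nabla Z,\nabla v\rangle$. The decisive cancellation is that the reaction term $v^{-1}\Phi'|A|^2\Phi$ coming from $\partial_t\Phi(H)$ is eliminated exactly by the $v$-portion of $-Zv^{-1}\partial_t v$, since $Zv = \Phi$. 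The leftover zeroth-order contributions $-Zv^{-1}[\epsilon\Phi'|A|^2 - (\Phi' H + \Phi)]$ regroup, via $Z/v = Z^2/\Phi$, into the claimed form $Z^2\bigl[(\Phi' H/\Phi + 1) - \epsilon\Phi'|A|^2/\Phi\bigr]$.

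The main obstacle is the bookkeeping: tracking which $|A|^2$-terms cancel and which survive, and assembling the surviving pieces into the compact $Z^2[\cdots]$ factor. A secondary point is reconciling the gradient term, since my calculation delivers $(2\Phi'/v)\langle \nabla Z,\nabla v\rangle = (2\Phi'/v)\langle \nabla Z,\nabla\langle \mathrm{X},\nu\rangle\rangle$; recasting this as $\langle \nabla Z,\nabla\Phi\rangle$ as in the statement rests on the identity $\nabla\Phi(H) = v\nabla Z + Z\nabla v$, which must be invoked carefully to avoid spurious $|\nabla Z|^2$ contributions.
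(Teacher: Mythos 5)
Your computation is correct and, in substance, the same as the paper's. The paper's proof expands $\partial_t Z$ directly via the quotient rule from \eqref{the evolution for H} and \eqref{evolution for support function} (its equation \eqref{equation:5.10}), then separately expands $\Phi'\Delta Z$ (its equation \eqref{equation:5.11}) and subtracts; this is exactly the bookkeeping you organise through the product identity $\Delta\Phi(H)=v\Delta Z+Z\Delta v+2\langle\nabla Z,\nabla v\rangle$. The cancellation of the $|A|^2$-reaction terms and the regrouping via $Z/v=Z^2/\Phi$ are likewise identical.

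On your ``secondary point'' about the gradient term: push that worry further, because it cannot be reconciled. The correct coefficient is
\[
\frac{2\Phi'}{\langle\mathrm{X},\nu\rangle-\epsilon}\,\bigl\langle\nabla Z,\nabla\langle\mathrm{X},\nu\rangle\bigr\rangle,
\]
and this is also exactly what the paper's own displayed equation \eqref{equation:5.11} produces. Substituting $\nabla\Phi=v\nabla Z+Z\nabla v$ gives
\[
\frac{2\Phi'}{v}\langle\nabla Z,\nabla\Phi\rangle
=2\Phi'|\nabla Z|^{2}+\frac{2\Phi' Z}{v}\langle\nabla Z,\nabla v\rangle,
\]
which differs from the true term both by the extra factor $Z$ and by a genuine, unavoidable $|\nabla Z|^2$ contribution. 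So the $\nabla\Phi$ appearing in the Corollary's statement (and again in the displayed inequality inside the proof of Theorem \ref{upper bound for speed of Phi flow}) is a typo for $\nabla\langle\mathrm{X},\nu\rangle$. The error is harmless for the intended application because the inequality is only evaluated at a spatial maximum of $Z$, where $\nabla Z=0$ annihilates the term in either form, but it is worth recording the correct version.
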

\begin{proof}

From \eqref{eqation:5.1}, \eqref{the evolution for H} and \eqref{evolution for support function}, it
follows
\begin{equation}\label{equation:5.10}
\begin{split}
\partial_{t} Z&=\frac{1}{\langle\mathrm{X},  \nu\rangle-\epsilon}
\left(\Phi'\Delta\Phi(H)+ \Phi'\Phi\left(\bigl|A\bigr|^{2}\right)\right)\\
&\quad-\frac{\Phi(H)}{(\langle\mathrm{X},  \nu\rangle-\epsilon)^{2}} \left(\Delta_{\dot \Phi} \langle\mathrm{X},  \nu\rangle
+ \bigl|A\bigr|^{2}\Phi'\langle\mathrm{X},  \nu\rangle- (\Phi' H+\Phi)\right).
\end{split}
\end{equation}
Another computation leads to
\begin{equation}\label{equation:5.11}
\begin{split}
\Phi'\Delta Z=\frac{\Phi'\Delta\Phi(H)}{\langle\mathrm{X},  \nu\rangle-\epsilon}
-\frac{ \Phi\Phi'\Delta\langle\mathrm{X},  \nu\rangle}{(\langle\mathrm{X},  \nu\rangle-\epsilon)^{2}} -2\frac{\Phi'}{\langle\mathrm{X},  \nu\rangle-\epsilon} \langle\nabla Z,\nabla \langle\mathrm{X},  \nu\rangle\rangle.
\end{split}
\end{equation}
Using \eqref{equation:5.11}, we can  simplify \eqref{equation:5.10} as the desired evolution equation for the function $Z$ easily.
\end{proof}

Now we apply the maximum principle to get an upper bound for $Z$
as long as the evolving hypersurface bounds a non-vanishing volume.
\begin{theorem}\label{upper bound for speed of Phi flow}
Let $M_{t}$ be a solution of the $\Phi(H)$-flow in
${\mathbb{R}}^{n+1}$, where the speed $\Phi(H)$ satisfies the conditions \ref{conditions on Phi}.
Suppose the initial hypersurface
$M_{0}$ is a  convex hypersurface, $\delta>0$, $q_{0}\in
{\mathbb{R}}^{n+1}$ and
$B_{\delta}(q_{0})\subset\Omega_{t}$ for all $t\in [0, \tau)$, which
boundary is $M_{t}$. Then
\[
H\left(p,t\right)\leq C(M_{0},\delta,n)\quad \text{for all}
\ \left(p,t\right)\in M^{n}\times [0,\tau).
\]
\end{theorem}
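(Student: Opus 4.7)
The plan is to apply the Tso-style maximum principle to the auxiliary function
$Z = \Phi(H)/(\langle X,\nu\rangle - \epsilon)$ introduced in Corollary \ref{evoluton for Z}. After translating coordinates so that $q_0 = 0$, the convexity of $M_t$ (preserved under the hypotheses of Theorem \ref{main result for Phi flow} by Lemma \ref{preserving strict convex} and Corollary \ref{preserving weak convex}) together with the inner-ball hypothesis $B_\delta(0) \subset \Omega_t$ yields the support-function bound $\langle X, \nu\rangle \geq \delta$ on $M^n \times [0, \tau)$: indeed, for any $p$, the point $\delta\,\nu(p)$ lies in $\overline{\Omega_t}$, so $\langle X(p,t), \nu(p,t)\rangle \geq \delta$. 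Choosing $\epsilon = \delta/2$ makes $Z$ smooth and positive, while the contracting nature of the flow gives $|X(p,t)| \leq R_0 := \max_{M_0}|X|$; thus $\delta/2 \leq \langle X,\nu\rangle - \epsilon \leq R_0$ uniformly in $(p,t)$.

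At any spatial maximum $(p_0,t_0)$ of $Z$, $\nabla Z = 0$ and $\Delta Z \leq 0$, so Corollary \ref{evoluton for Z} reduces to
\[
\partial_t Z_{\max} \leq Z_{\max}^2\left[\frac{\Phi' H + \Phi}{\Phi} - \epsilon\,\frac{\Phi'|A|^2}{\Phi}\right].
\]
Using the pinching $|A|^2 \geq H^2/n$ and rearranging yields
\[
\partial_t Z_{\max} \leq Z_{\max}^2 \cdot \frac{\Phi'}{\Phi}\left[H + \frac{\Phi}{\Phi'} - \frac{\epsilon H^2}{n}\right].
\]
At this stage I would read the fourth hypothesis of Assumption \ref{conditions on Phi} in the form $\bigl(\log(x\Phi'(x)/\Phi(x))\bigr)' \geq 0$, obtained by dividing $\Phi\Phi''x + \Phi\Phi' - (\Phi')^2 x \geq 0$ by the positive quantity $x\Phi\Phi'$; this says that $x\Phi'(x)/\Phi(x)$ is non-decreasing in $x$. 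Combined with $H\geq H_{\min}(0)>0$ from Proposition \ref{minH is increasing}, it yields a uniform bound
$\Phi(H)/(H\Phi'(H)) \leq K_0 := \Phi(H_{\min}(0))/(H_{\min}(0)\Phi'(H_{\min}(0)))$ on $M^n \times [0,\tau)$.

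Consequently the bracket is dominated by $H(1 + K_0) - \epsilon H^2/n$. Setting $H_\ast := 2n(1 + K_0)/\epsilon$ and $Z_\ast := 2\Phi(H_\ast)/\delta$, whenever $Z_{\max}(t) > Z_\ast$ the identity $Z_{\max} = \Phi(H)/(\langle X,\nu\rangle - \epsilon) \leq 2\Phi(H)/\delta$ at the maximum point forces $H(p_0, t) > H_\ast$, so the bracket is non-positive and $\partial_t Z_{\max} \leq 0$. A routine ODE comparison then gives $Z_{\max}(t) \leq \max\{Z_{\max}(0), Z_\ast\} =: C_1(M_0, \delta, n)$; hence $\Phi(H) \leq C_1 R_0$ pointwise, and strict monotonicity of $\Phi$ converts this into the claimed bound $H \leq \Phi^{-1}(C_1 R_0) =: C(M_0, \delta, n)$. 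The main technical obstacle is precisely the step of extracting the monotonicity of $x\Phi'(x)/\Phi(x)$ from the fourth hypothesis of Assumption \ref{conditions on Phi}, which is needed so that the ``bad'' quadratic term $-\epsilon Z^2 \Phi'|A|^2/\Phi$ eventually dominates $Z^2(\Phi' H + \Phi)/\Phi$ as $H$ grows; once this ratio control is in hand, everything reduces to the standard Tso trick.
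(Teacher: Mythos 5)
Your proof is correct and follows essentially the same route as the paper: the Tso auxiliary function $Z=\Phi(H)/(\langle X,\nu\rangle-\epsilon)$, the inner-ball lower bound on the support function, the pinching $|A|^2\geq H^2/n$, and crucially the observation that the fourth hypothesis of Assumption \ref{conditions on Phi} is exactly the statement that $\Phi(x)/(x\Phi'(x))$ is non-increasing (the paper records this as $\bigl(\tfrac{\Phi}{H\Phi'}+1\bigr)'\leq 0$, which is your logarithmic-derivative reformulation), giving a uniform bound on the ratio at $H_{\min}(0)$ that makes the $-\epsilon H^2/n$ term dominate once $H$ is large. The only cosmetic differences are your choice to factor out $\Phi'/\Phi$ rather than the paper's $\Phi'H/(n\Phi)$ (the paper's factoring actually has a spurious $n$ that your version corrects) and your framing of the conclusion as an ODE comparison rather than the paper's first-touching contradiction at a large maximum; these are the same argument.
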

\begin{proof}
Without loss of generality, we take the point $q_{0}$ as the origin of ${\mathbb{R}}^{n+1}$
such that $\mathrm{X}$ is the position vector field.
Since it is proved previously that $M_{t}$ is  convex along the
flow \eqref{eqation:1.1}, there exists a constant $\epsilon>0$ in the definition \eqref{eqation:5.1} of
$Z$, $\epsilon=\epsilon(\delta)$,
such that the support function $\langle\mathrm{X}, \nu\rangle $
satisfies
 \[
\langle\mathrm{X}, \nu\rangle \geq 2\epsilon
\]
implies
\[
\langle\mathrm{X}, \nu\rangle-\epsilon\geq \epsilon>0.
\]
Combining this, convexity of $M_{t}$ implies that
$Z\geq 0$ and $\bigl|A\bigr|^{2}\geq\frac{1}{n}H^{2}$.
 From Corollary \ref{evoluton for Z}, the following
inequality can be obtained:
\begin{align*}
\partial_{t}Z&\leq \Phi'\Delta Z
+\frac{2\Phi'}{\langle\mathrm{X},  \nu\rangle-\epsilon}\left\langle\nabla Z,\nabla\Phi\right\rangle +Z^{2} \left[\left(\frac{\Phi' H}{\Phi}+1\right)-\epsilon \frac{\Phi' \bigl|H\bigr|^{2}}{n\Phi}\right]\\
&=\Phi'\Delta Z
+\frac{2\Phi'}{\langle\mathrm{X},  \nu\rangle-\epsilon}\left\langle\nabla Z,\nabla\Phi\right\rangle +Z^{2} \frac{\Phi' H}{n\Phi}\left[\left(\frac{\Phi }{H\Phi'}+1\right)-\epsilon \frac{ H}{n}\right].
\end{align*}
Notice that
\[
\left(\frac{\Phi }{H\Phi'}+1\right)'=-\frac{\Phi\Phi''H+\Phi \Phi'-\left(\Phi'\right)^{2}H }{\left(H\Phi'\right)^{2}}.
\]
From the assumptions \ref{conditions on Phi}, it follows that
\[
\left(\frac{\Phi }{H\Phi'}+1\right)'\leq 0,
\]
which implies the following estimate
\[
\left(\frac{\Phi}{H\Phi'}+1\right)\leq \left(\frac{\Phi(H_{\min}(0)) }{H_{\min}(0)\Phi'(H_{\min}(0))}+1\right)
:=D.
\]
Therefore, in view of the assumptions \ref{conditions on Phi} and the abouve estimate
 we bound the $\partial_{t}Z$ as follows
\begin{align*}
\partial_{t}Z\leq \Phi'\Delta Z
+\frac{2\Phi'}{\langle\mathrm{X},  \nu\rangle-\epsilon}\left\langle\nabla Z,\nabla\Phi\right\rangle +Z^{2} \frac{\Phi' H}{n\Phi}
\left(D-\epsilon \frac{ H}{n}\right).
\end{align*}

Assume that in $(p_{0},t_{0})$, $Z$ attains a big maximum $C\gg 0$ for
the first time. Then
\[
\Phi(H)(p_{0},t_{0})\geq C(\langle\mathrm{X},  \nu\rangle-\epsilon)(p_{0},t_{0})\geq
\epsilon C,
\]
which gives a contradiction if
\[
C\geq \max_{p\in
M^{n}}\left\{Z(p,0),\frac{1}{\epsilon}\Phi\left(\frac{n D}{\epsilon}\right)\right\}.
\]
\end{proof}

\begin{proof}[Proof of Theorem \ref{main result for Phi flow}]
Theorem \ref{the long time existence for PhiH} and Theorem \ref{upper bound for speed of Phi flow}
 ensure that the $\Phi(H)$-flow exists as long as it bounds a non-vanishing domain.
 Lemma \ref{graph represent for PhiH} and Proposition \ref{weakly convex become strong convex for Phi flow} show
that all hypersurfaces are strictly  convex for  $t\in [0, \tau)$,
thus $\lim_{t\rightarrow T}\lambda_{\min}(t)\geq \delta>0$.
Now by adapting Schulze's approach in the case $H^{\beta}$-flow in \cite{Sch05},
one can complete the remainder of the proof for convergence to a single point as the final time is
approached.
\end{proof}

\bigskip
\noindent{\bf  Acknowledgments.}
\quad  This paper was written while I was a post-doctor fellow
at School of Mathematics, Sichuan University.
I am grateful to the School for providing an ideal working atmosphere.
I would like to express special thanks to Professor An-Min Li for his enthusiasm and encouragement, and
Professor Guanghan Li for his patience, suggestion and helpful discussion on this topic. The research is partially supported by China Postdoctoral Science Foundation Grant 2015M582546
and Natural Science Foundation Grant 2016J01672 of the Fujian Province, China.

\newcommand{\J}[4]{{\sl #1} {\bf #2} (#3) #4}
\newcommand{\CMP}{Comm.\ Math.\ Phys.}

\end{document}